\newcommand{\la}{\lambda}
\definecolor{myblue}{HTML}{5555f0}
\definecolor{paleblue}{HTML}{bbbbf9}
\definecolor{myred}{HTML}{f03f02}
\newcommand{\grey}{darkgray}
\newcommand{\blue}{myblue}
\newcommand{\paleblue}{paleblue}
\newcommand{\red}{myred}
\tikzset{%
every picture/.append style={x={(0,-0.85cm)},y={(0.85cm,0)}},
mid arrow/.style={postaction={decorate,decoration={
    markings,
    mark=at position 0.6 with {\arrow{Stealth}}
}}},
half mid arrow/.style={postaction={decorate,decoration={
    markings,
    mark=at position 0.58 with {\arrow{Stealth[harpoon]}}
}}},
}
\newcommand{\drawnode}[1]{%
    \fill[\grey] #1 circle (1pt);
}
\newcommand{\nudge}{0.015}  
\definecolor{dark-blue}{rgb}{0.15,0.15,0.4}
\declaretheorem[style=plain,numberwithin=section]{theorem}
\declaretheorem[style=plain,numberlike=theorem]{lemma}
\declaretheorem[style=plain,numberlike=theorem]{proposition}
\declaretheorem[style=remark,numberlike=theorem]{remark}
\declaretheorem[style=definition,numberlike=theorem]{definition}
\declaretheorem[style=definition,numberlike=theorem]{example}
\declaretheorem[name=Example,style=definition,numbered=no]{example*}
\renewcommand{\sectionautorefname}{\S\@gobble}
\renewcommand{\subsectionautorefname}{\S\@gobble}
\newcommand{\initialsspace}{0.1em}
\newcommand{\splitlist}[1]{\@splitlist#1\@nil}
\def\@splitlist#1\@nil{%
  \if\relax\detokenize{#1}\relax
    \expandafter\@gobble
  \else
    \expandafter\@firstofone
  \fi
  {\@spl@tlist#1.\@nil}%
}
\def\@spl@tlist#1.#2\@nil{%
    \def\tmpA{#1}%
    \def\tmpB{#2}%
    \def\tmpP{.}%
    \ifx\tmpB\tmpP%
        #1.%
    \else{%
        \ifx\tmpA\@empty%
        \else%
                #1.\nobreak\hspace{\initialsspace}%
        \fi%
    }%
    \fi%
  \if\relax\detokenize{#2}\relax
    \expandafter\@firstoftwo
  \else
    \expandafter\@secondoftwo
  \fi
  {\unskip}%
  {\@spl@tlist#2\@nil}%
}
\NewDocumentCommand\set{s m}{%
    \IfBooleanTF#1%
    {\left\{ #2 \right\}}%
    {\{#2\}}%
}
\NewDocumentCommand\setbuild{s m m}{%
    \IfBooleanTF#1%
    {\ensuremath{\left\{\, #2 \, \middle| \, #3 \,\right\}}}%
    {\ensuremath{\{\, #2 \, \mid \, #3 \,\}}}%
}
\DeclarePairedDelimiter{\abs}{\lvert}{\rvert}
\let\oldabs\abs
\def\abs{\@ifstar{\oldabs}{\oldabs*}}
\newcommand{\Z}{\mathbb{Z}}
\newcommand{\intervalwz}[1]{{[#1]}_0}
\newcommand{\Y}[1]{\operatorname{Y}(#1)}
\newcommand{\Ybox}[2]{\beth_{(#1,#2)}}
\newcommand{\qbinom}[2]{\binom{#1}{#2}_q}
\renewcommand{\leq}{\leqslant}
\renewcommand{\geq}{\geqslant}
\newcommand{\comp}{\mathsf{c}}
\newcommand{\Llattice}{\Lambda_{\text{L}}(\beta/\alpha)}
\newcommand{\Rlattice}{\Lambda_{\text{R}}(\beta/\alpha)}
\newcommand{\alphapartition}{(2,1,1,0,0,0)}
\newcommand{\betapartition}{(6,6,5,4,4,3)}
\title
[Flagged Schur polynomial duality]
{Flagged Schur polynomial duality \\ via a lattice path bijection}
\author{Eoghan McDowell}
\def\@setfoot@addresses{%
    \ifx\@affiliation\@empty\else
    \affiliationname\ \@affiliation \@addpunct.%
    \fi
    \ifx\web@page\@empty\else

    \webpagename\ \web@page \@addpunct.%
    \fi
}
\def\affiliationname{\textit{Affiliation}:}
\def\affiliation#1{\gdef\@affiliation{#1}}
\let\@affiliation\@empty
\def\webpagename{\textit{Webpage}:}
\def\webpage#1{\gdef\web@page{#1}}
\let\web@page\@empty
\email{eoghan.mcdowell@oist.jp} 
\affiliation{Okinawa Institute of Science and Technology}
\def\@setsubjclass{%
    \vspace{4pt}

    \vspace{-\baselineskip}
    {\itshape\subjclassname.}\enspace\@subjclass\@addpunct.%
}
\keywords{
Symmetric polynomials, lattice paths, determinants, \(q\)-binomial coefficients, flagged Schur polynomials%
}
\subjclass[2020]{
05-A10, 
05-A19, 
05-E05
}
\def\@setthanks{%
\vspace{-\baselineskip}\vspace{4pt}
\def\thanks##1{\@par##1\@addpunct.}
\thankses%
}
\def\journalinfo#1{\thanks{#1}}
\begin{document}

\begin{abstract}
This paper proves an identity between flagged Schur polynomials, giving a duality between row flags and column flags.
This identity generalises both the binomial determinant duality theorem due to Gessel and Viennot and the symmetric function duality theorem due to Aitken.
As corollaries we obtain the lifts of the binomial determinant duality theorem to \(q\)-binomial coefficients and to symmetric polynomials.
Our method is a path counting argument on a novel lattice generalising that used by Gessel and Viennot.
\end{abstract}

\maketitle

\section{Introduction}
\label{section:intro}

We generalise the following identities, due respectively to Gessel and Viennot and to Aitken.
For \(n\) a nonnegative integer, we write \(\intervalwz{n}=\set{0,1,\ldots,n}\).

\begin{restatable}[Binomial determinant duality theorem {\cite[Proposition 7]{gessel1985paths}}]{theorem}{binomialidentity}
\label{thm:binomial_identity}
Let \(n\) be a nonnegative integer, let \(A\) and \(B\) be subsets of \(\intervalwz{n}\) of equal size, and let \(A^\comp\) and \(B^\comp\) be their complements in \(\intervalwz{n}\). Then
\[
    \det\mleft(%
        \binom{b}{a}%
    \mright)_{a \in A, b \in B}
    =
    \det\mleft(%
        \binom{a'}{b'}%
    \mright)_{a' \in A^\comp, b' \in B^\comp}
    .
\]
\end{restatable}

\begin{restatable}[Symmetric function duality theorem {\cite[\S2]{aitken1931duality}}]
{theorem}{symfunctionidentity}
\label{thm:sym_function_identity}
Let \(n\) be a nonnegative integer, let \(A\) and \(B\) be subsets of \(\intervalwz{n}\) of equal size, and let \(A^\comp\) and \(B^\comp\) be their complements in \(\intervalwz{n}\).
Then
\[
    \det\Bigl(%
        h_{b - a} %
    \Bigr)_{a \in A, b \in B}
    =
    \det\Bigl(%
        e_{a' - b'} %
    \Bigr)_{a' \in A^\comp, b' \in B^\comp}
    .
\]
\end{restatable}

Here \(h_i\) and \(e_i\) denote the complete homogeneous and elementary symmetric functions of degree \(i\) respectively, where by convention \(e_0 = h_0 = 1\) and \(e_d = h_d = 0\) for \(d < 0\).
For the purposes of indexing matrices, we consider finite subsets of \(\Z\) to be ordered smallest element to largest.

We state our main theorem here in a slightly weakened form to avoid a technical condition whose necessity becomes clear only in the proof; the full statement with a weaker requirement on the parameters is given in \autoref{thm:main_theorem}.

\begin{restatable}{theorem}{maintheorem}\label{thm:intro_det_identity}
Let \(n\) be a nonnegative integer, let \(A\) and \(B\) be subsets of \(\intervalwz{n}\) of equal size, and let \(A^\comp\) and \(B^\comp\) be their complements in \(\intervalwz{n}\).
Let \(\alpha\) and \(\beta\) be partitions with \(n\) parts (with parts equal to \(0\) permitted) such that \(\alpha_i \leq \beta_i\) for all \(i \in [n]\).
Suppose each part of \(\alpha\) and \(\beta\) is at most \(1\) less than the preceding part.
Then the determinants
\[
    \det\Bigl(%
        h_{b - a} (x_{\alpha_{a+1}+1}, x_{\alpha_{a+1}+2}, \ldots, x_{\beta_{b}}) %
    \Bigr)_{a \in A, b \in B}
\]
and
\[
    \det\Bigl(%
        e_{a' - b'} (x_{\alpha_{a'}+1}, x_{\alpha_{a'}+2}, \ldots, x_{\beta_{b'+1}}) %
    \Bigr)_{a' \in A^\comp, b' \in B^\comp}
\]
are equal.
\end{restatable}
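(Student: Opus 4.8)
The plan is to realise each of the two determinants as a generating function for families of non-intersecting lattice paths, on two different lattices, and then to relate the two families by a complementation bijection, extending the path-counting argument of Gessel and Viennot \cite{GESSEL1985300}. First I would attach to the skew shape $\beta/\alpha$ two weighted directed acyclic lattices, which I shall call $\Llattice$ and $\Rlattice$. On $\Llattice$ the admissible steps are horizontal and vertical, a horizontal step at height $j$ carrying the weight $x_j$; the identity $h_k(x_c, x_{c+1}, \ldots, x_d) = \sum_{c \leq j_1 \leq \cdots \leq j_k \leq d} x_{j_1} \cdots x_{j_k}$ then shows that the weighted number of paths from a source $s_a$ (positioned according to $a$ alone, so as to record the lower flag $\alpha_{a+1}+1$) to a sink $t_b$ (positioned according to $b$ alone, so as to record the upper flag $\beta_b$) is exactly $h_{b-a}(x_{\alpha_{a+1}+1}, \ldots, x_{\beta_b})$. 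On $\Rlattice$ the admissible steps are vertical and diagonal, a diagonal step contributing the weight of the variable it crosses, so that the companion identity $e_k(x_c, \ldots, x_d) = \sum_{c \leq j_1 < \cdots < j_k \leq d} x_{j_1} \cdots x_{j_k}$ identifies the weighted number of paths between the relevant points with $e_{a'-b'}(x_{\alpha_{a'}+1}, \ldots, x_{\beta_{b'+1}})$. The hypothesis that each part of $\alpha$ and of $\beta$ is at most $1$ less than the preceding part is what makes the sources, and separately the sinks, lie along genuine staircases in each lattice; a little extra care is needed for the degenerate flags, where a term $h_0 = 1$ or $e_0 = 1$ must correspond to an empty path.

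Next I would invoke the Lindstr\"om--Gessel--Viennot lemma. Because the ``horizontal'' coordinate of $s_a$ is $a$ and that of $t_b$ is $b$, any path out of $s_a$ reaches only sinks $t_b$ with $b \geq a$, and a short crossing argument shows that the only bijection $A \to B$ realisable by a family of non-intersecting paths is the order isomorphism, which has sign $+1$. Hence the lemma identifies $\det\bigl( h_{b-a}(\ldots) \bigr)_{a \in A, b \in B}$ with the sign-free generating function of families of non-intersecting paths in $\Llattice$ from $\{s_a : a \in A\}$ to $\{t_b : b \in B\}$, and identically it identifies $\det\bigl( e_{a'-b'}(\ldots) \bigr)_{a' \in A^\comp, b' \in B^\comp}$ with the generating function of non-intersecting path families in $\Rlattice$ running from the sources indexed by $A^\comp$ to the sinks indexed by $B^\comp$.

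The crux is then to biject these two collections of path families. Inside $\beta/\alpha$ a non-intersecting family in $\Llattice$ occupies certain edges; the claim to prove is that the edges it leaves unused, reassembled together with the inner and outer boundaries of $\beta/\alpha$ and suitably reoriented, constitute a non-intersecting family in $\Rlattice$; that this passage is an involution; and that it carries families joining the $A$-sources to the $B$-sinks exactly onto families joining the $A^\comp$-sources to the $B^\comp$-sinks, with the same total edge weight. This is the path-level incarnation of the involution $\omega$ interchanging $h$ and $e$, and it is the step I expect to be the main obstacle: one must verify carefully that the unused edges genuinely organise into a legal path family of the \emph{other} type, and it is exactly here that a drop of $2$ or more between consecutive parts of $\alpha$ or $\beta$ would be fatal, creating a cliff in the boundary of $\beta/\alpha$ across which the complementary paths cannot be routed --- this being the technical restriction alluded to in the introduction, which \autoref{thm:main_theorem} relaxes only as far as the construction tolerates.

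Granting the bijection, both determinants equal the same generating function, so they coincide, which is the theorem. As sanity checks along the way: taking $\alpha$ identically $0$ and $\beta$ constant and then letting that constant grow recovers \autoref{thm:sym_function_identity}, while taking $\alpha = (n-1, n-2, \ldots, 1, 0)$, $\beta = (n, n, \ldots, n)$ and specialising every $x_j$ to $1$ recovers \autoref{thm:binomial_identity}.
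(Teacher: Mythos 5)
Your outline is essentially the paper's proof: the same two lattices built on the skew diagram, the Lindstr\"{o}m--Gessel--Viennot lemma applied to each to turn the determinants into weighted counts of non-intersecting path families, and a complementation bijection between the two families. The one structural point you have wrong is where the staircase hypothesis enters. You place it in the bijection step, claiming that a drop of $2$ or more between consecutive parts would make the complementary paths unroutable; in fact the complementation bijection works for an arbitrary skew shape (indeed for arbitrary compositions with $\alpha_i \le \beta_i$ for all $i$), and the paper proves it in that generality. The hypothesis is needed earlier, in the step you pass over without comment: the claim that the weighted number of paths in $\Rlattice$ from the source indexed by $b'$ to the sink indexed by $a'$ equals $e_{a'-b'}(x_{\alpha_{a'}+1},\ldots,x_{\beta_{b'+1}})$. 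The diagonal steps of such a path are forced to lie in a parallelogram-shaped region of boxes, and the ``cliff'' you worry about is precisely what can push part of that parallelogram outside $\Y{\beta/\alpha}$, in which case some monomials of the elementary symmetric polynomial are realised by no path and that matrix entry undercounts. (The analogous region on the $h$-side is a rectangle, which lies inside the diagram for any pair of partitions, so no hypothesis is needed there.) This is why the full statement in \autoref{thm:main_theorem} phrases the hypothesis as a condition guaranteeing these parallelograms fit, rather than as a condition on the bijection. With the hypothesis relocated to the enumeration step, your argument is the paper's.
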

We illustrate this statement in \autoref{eg:intro_thm_example} at the end of this section.
Note that symmetric functions of positive degree over an empty set of variables are considered to equal~\(0\).

The determinants in \autoref{thm:intro_det_identity} are equal to \emph{flagged (skew) Schur polynomials}.
These polynomials were introduced by Lascoux and Sch\"{u}tzenberger \cite{lascoux1982Schubert}; we recall the definitions in \autoref{subsection:flagged_polys}.
Our first determinant equals a row-flagged polynomial; the second, a column-flagged polynomial.
We therefore obtain the following duality theorem for flagged skew Schur polynomials (see  \autoref{subsection:flagged_polys} for the definitions and a version with weaker conditions on the parameters).

\begin{theorem}\label{thm:intro_flagged_polynomial_identity}
Let \(n\), \(A\) and \(B\) be as in the statement of \autoref{thm:intro_det_identity}.
Set:
\begin{align*}
    \mu_i   &= A_{l+1-i} + i - l, &\, 
    \la_i       &= B_{l+1-i} + i - l, \\
    f_i         &= \alpha_{A_{l+1-i}+1}+1, &\,
    g_i         &= \beta_{B_{l+1-i}}, \\
    f^\ast_i    &= \alpha_{A^\comp_{i}}+1, &\,
    g^\ast_i    &= \beta_{B^\comp_{i}+1}.
\end{align*}
Then
\[
    s_{\la/\mu}(f,g) = s^\ast_{\la'/\mu'}(f^\ast, g^\ast).
\]
\end{theorem}

%

Our proof of our identity, which comprises \autoref{section:proof}, is by counting paths on lattices using the Lindstr\"{o}m--Gessel--Viennot lemma and constructing a bijection between sets of such paths.
This is the approach used by Gessel and Viennot to prove \autoref{thm:binomial_identity}.
However, we obtain an identity more general than theirs by applying the lemma to lattices of more general shape and with weighted edges.
Weighted edges allow us to replace binomial coefficients with symmetric polynomials;
the more general shape of our lattices allows us to vary the number of variables in each symmetric polynomial. 

We explain how \autoref{thm:binomial_identity} and \autoref{thm:sym_function_identity} can be deduced from \autoref{thm:intro_det_identity} in \autoref{section:corollaries}.
Gessel and Viennot's binomial duality theorem corresponds to a staircase-shaped lattice, while Aitken's symmetric function duality theorem corresponds to a rectangular lattice.
This paper thus provides a unifying framework for these two results, and since our theorem also permits lattices of shapes intermediate between staircases and rectangles, it generalises them significantly.

We also obtain lifts of the binomial duality theorem to \(q\)-binomial coefficients and to symmetric polynomials, which we state here and prove in \autoref{section:corollaries}.

\begin{restatable}
{corollary}{qbinomialidentity}
\label{cor:q-binomial_identity}
Let \(n\) be a nonnegative integer, let \(A\) and \(B\) be subsets of \(\intervalwz{n}\) of equal size, and let \(A^\comp\) and \(B^\comp\) be their complements in \(\intervalwz{n}\).
Then
\[
    \det\mleft(%
        \qbinom{b}{a}%
    \mright)_{a \in A, b \in B}
    =
    \det\mleft(%
        q^{\binom{a'-b'}{2}} \qbinom{a'}{b'}%
    \mright)_{a' \in A^\comp,b' \in B^\comp}
    .
\]
\end{restatable}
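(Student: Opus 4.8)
The plan is to derive \autoref{cor:q-binomial_identity} from \autoref{thm:intro_det_identity} along the same lines as the deduction of \autoref{thm:binomial_identity} in \autoref{section:corollaries}, but evaluating the symmetric polynomials at a geometric progression in place of all ones. The input from the theory of \(q\)-analogues is the pair of classical evaluations (both standard), valid for all nonnegative integers \(k\) and \(m\),
\[
    h_k(1, q, \ldots, q^{m-1}) = \qbinom{m+k-1}{k}
    \qquad\text{and}\qquad
    e_k(1, q, \ldots, q^{m-1}) = q^{\binom{k}{2}} \qbinom{m}{k} ,
\]
together with the homogeneity relations \(h_k(q^s x_1, \ldots, q^s x_\ell) = q^{ks} h_k(x_1,\ldots,x_\ell)\) and \(e_k(q^s x_1, \ldots, q^s x_\ell) = q^{ks} e_k(x_1,\ldots,x_\ell)\). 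I would first record these, noting that the asymmetry between the two evaluations --- a bare \(q\)-binomial coefficient on one side, one scaled by \(q^{\binom{k}{2}}\) on the other --- is precisely the asymmetry that distinguishes the two sides of \autoref{cor:q-binomial_identity} and that is absent from \autoref{thm:binomial_identity}.

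Next I would apply \autoref{thm:intro_det_identity} with the staircase partitions that yield \autoref{thm:binomial_identity} --- explicitly \(\alpha_i = 0\) and \(\beta_i = n + 1 - i\) for \(i \in [n]\), which satisfy the hypotheses since \(\alpha\) is constant, each part of \(\beta\) is exactly one less than the preceding part, and \(\alpha_i \leq \beta_i\) throughout --- and with the substitution \(x_i = q^{i-1}\). Under this substitution the variable window \(x_{\alpha_{a+1}+1}, \ldots, x_{\beta_b}\) becomes the progression \(1, q, \ldots, q^{\beta_b - 1}\) of length \(\beta_b = n+1-b\) (beginning at \(q^0 = 1\) because \(\alpha \equiv 0\)), so the first evaluation above turns the \((a,b)\) entry of the first matrix into \(\qbinom{n-a}{b-a}\) and the second turns the \((a',b')\) entry of the second matrix into \(q^{\binom{a'-b'}{2}} \qbinom{n-b'}{a'-b'}\). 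Hence \autoref{thm:intro_det_identity} gives
\[
    \det\mleft( \qbinom{n-a}{b-a} \mright)_{a \in A,\, b \in B}
    =
    \det\mleft( q^{\binom{a'-b'}{2}} \qbinom{n-b'}{a'-b'} \mright)_{a' \in A^\comp,\, b' \in B^\comp} .
\]

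Finally I would reverse the ground set by applying \(k \mapsto n - k\) to every index. Using \(\qbinom{m}{k} = \qbinom{m}{m-k}\), this replaces \(\qbinom{n-a}{b-a}\) by \(\qbinom{b}{a}\) and \(q^{\binom{a'-b'}{2}}\qbinom{n-b'}{a'-b'}\) by \(q^{\binom{a'-b'}{2}}\qbinom{a'}{b'}\), at the cost of reversing the orders of the rows and of the columns of each matrix --- the same reversal applied twice, so neither determinant changes --- and of transposing each matrix, which also leaves determinants unchanged. Since complementation in \(\intervalwz{n}\) commutes with \(k \mapsto n - k\), what emerges is exactly the identity of \autoref{cor:q-binomial_identity} with \(A\) and \(B\) replaced by \(\setbuild{n - b}{b \in B}\) and \(\setbuild{n - a}{a \in A}\); as \(A, B\) range over all pairs of equal-size subsets of \(\intervalwz{n}\), so do these, which completes the deduction.

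I do not expect a substantive obstacle: granted \autoref{thm:intro_det_identity}, the corollary is a specialisation of the variables followed by a relabelling of the index sets. The one point requiring care --- and it is the same one met in deducing \autoref{thm:binomial_identity} --- is the bookkeeping at the boundary of the index range: an entry with \(b = 0\) formally refers to the undefined part \(\beta_0\), and an entry with \(b' = n\) to \(\beta_{n+1}\), but in each such case the symmetric polynomial in question has nonpositive degree, so it equals \(0\) unless that degree is \(0\), in which case it equals \(h_0 = e_0 = 1\) independently of the missing part; thus no genuine ambiguity is introduced.
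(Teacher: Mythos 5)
Your proposal is correct and is essentially the paper's own argument: specialise the main theorem to a staircase lattice, substitute \(x_i = q^{i-1}\), and invoke the standard principal-specialisation formulas \(h_k(1,q,\ldots,q^{m-1}) = \qbinom{m+k-1}{k}\) and \(e_k(1,q,\ldots,q^{m-1}) = q^{\binom{k}{2}}\qbinom{m}{k}\). The only difference is cosmetic: the paper takes the mirror-image staircase \(\alpha_i = n-i\), \(\beta = (n^n)\) and reverses the \emph{variables} (so that \(A\) and \(B\) are preserved and \autoref{cor:sym_poly_binomial_identity} appears as an intermediate step), whereas you take \(\alpha = 0\), \(\beta_i = n+1-i\) and reverse the \emph{indices} afterwards via \(k \mapsto n-k\) --- both reversals are harmless for the determinants, and your bookkeeping for the index relabelling checks out.
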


\begin{restatable}
{corollary}{sympolybinomidentity}
\label{cor:sym_poly_binomial_identity}
Let \(n\) be a nonnegative integer, let \(A\) and \(B\) be subsets of \(\intervalwz{n}\) of equal size, and let \(A^\comp\) and \(B^\comp\) be their complements in \(\intervalwz{n}\).
Then
\[
    \det\Bigl(%
        h_{b - a} (x_{1}, x_{2}, \ldots, x_{a+1}) %
    \Bigr)_{a \in A, b \in B}
    =
    \det\Bigl(%
        e_{a' - b'} (x_{1}, x_{2}, \ldots, x_{a'}) %
    \Bigr)_{a' \in A^\comp, b' \in B^\comp}
    .
\]
\end{restatable}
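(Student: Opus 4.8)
The plan is to obtain \autoref{cor:sym_poly_binomial_identity} from \autoref{thm:intro_det_identity} by specialising the two partitions to a staircase and a constant partition, and then reversing the order of the variables.

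First I would apply \autoref{thm:intro_det_identity} with \(\alpha = (n-1, n-2, \ldots, 1, 0)\) and \(\beta = (n, n, \ldots, n)\), each with \(n\) parts. These satisfy every hypothesis: both are partitions with parts in \(\intervalwz{n}\); we have \(\alpha_i = n - i \leq n = \beta_i\) for all \(i \in [n]\); and in each partition consecutive parts differ by at most \(1\) (by exactly \(1\) in \(\alpha\), by \(0\) in \(\beta\)), so no part falls more than \(1\) below the preceding part. With these choices one has \(\alpha_{a+1}+1 = n-a\) and \(\beta_b = n\), so the generic \(h\)-entry of \autoref{thm:intro_det_identity} is \(h_{b-a}(x_{n-a}, x_{n-a+1}, \ldots, x_n)\); similarly \(\alpha_{a'}+1 = n-a'+1\), so the generic \(e\)-entry is \(e_{a'-b'}(x_{n-a'+1}, x_{n-a'+2}, \ldots, x_n)\). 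Any entry that references an out-of-range subscript (\(\alpha_0\), \(\alpha_{n+1}\), \(\beta_0\), or \(\beta_{n+1}\)) has degree \(0\), hence equals \(1\) or \(0\), so these cause no difficulty. Thus \autoref{thm:intro_det_identity} yields
\[
    \det\bigl( h_{b-a}(x_{n-a}, \ldots, x_n) \bigr)_{a \in A,\, b \in B}
    = \det\bigl( e_{a'-b'}(x_{n-a'+1}, \ldots, x_n) \bigr)_{a' \in A^\comp,\, b' \in B^\comp}.
\]

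To finish, I would note that only the variables \(x_1, \ldots, x_n\) occur in this identity, so it remains valid after the permutation of variables that swaps \(x_k\) with \(x_{n+1-k}\) for every \(k \in [n]\). This permutation sends \(\{x_{n-a}, \ldots, x_n\}\) to \(\{x_1, \ldots, x_{a+1}\}\) and \(\{x_{n-a'+1}, \ldots, x_n\}\) to \(\{x_1, \ldots, x_{a'}\}\), so, by the symmetry of \(h_{b-a}\) and \(e_{a'-b'}\), the left- and right-hand determinants turn into exactly those of \autoref{cor:sym_poly_binomial_identity}.

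I do not anticipate a real obstacle: granted \autoref{thm:intro_det_identity}, the corollary is a matter of feeding in the correct parameters. The single point of insight is that the staircase shape is encoded by taking \(\alpha\) — and not \(\beta\) — to be the staircase partition while \(\beta\) is constant; only afterwards, reversing the variable order, does the expression assume the form stated. The remaining steps (checking the hypotheses, noting that the degree-\(0\) entries absorb the out-of-range subscripts, and confirming that \(n\) variables suffice) are routine.
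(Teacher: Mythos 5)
Your proposal is correct and is essentially the paper's own proof: the paper likewise takes \(\beta = (n^n)\) and \(\alpha_i = n-i\) in the main theorem and then relabels the variables via \(x_i \mapsto x_{n+1-i}\). Your extra remark about the out-of-range subscripts producing only degree-\(0\) entries is a fine point of care that the paper leaves implicit.
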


Aitken's proof of \autoref{thm:sym_function_identity} \cite{aitken1931duality} is of an entirely different flavour to the methods in this paper: it is an application of an elementary result in linear algebra due to Jacobi, which we refer to as Jacobi's complementary minor formula.
This formula also corresponds to duality between certain combinatorial models of orthogonal polynomials \cite[I\S5]{viennot1985orthogonal}.
However, we show in \autoref{section:JT_connection} that Jacobi's formula is insufficient to prove our main theorem.

\begin{example}
\label{eg:intro_thm_example}
Suppose \(n=4\), \(A = \set{0,1,2}\), \(B = \set{1,3,4}\), \(\alpha = (1,1,0,0)\) and \(\beta = (4,3,3,2)\).
Then the two determinants that \autoref{thm:intro_det_identity} states are equal are
\[
\hspace*{-6.5em}
\begin{blockarray}{cc>{\scriptstyle}c>{\scriptstyle}c>{\scriptstyle}c}
      &   & b=1 & b=3 & b=4 \\
      &   & \beta_{b}=4 & \beta_{b}=3 & \beta_{b}=2\\[4pt]
    \begin{block}{>{\scriptstyle}c>{\scriptstyle}c|ccc|}
    a=0 & \alpha_{a+1}+1=2 & h_1(x_2, x_3, x_4) & h_3(x_2, x_3) & h_4(x_2) \\
    a=1 & \alpha_{a+1}+1=2 & 1 & h_2(x_2, x_3) & h_3(x_2) \\
    a=2 & \alpha_{a+1}+1=1 & 0 & h_1(x_1, x_2, x_3) & h_2(x_1, x_2) \\
    \end{block}
\end{blockarray}
\]
and
\[
\hspace*{-6.5em}
\begin{blockarray}{ccc>{\scriptstyle}c>{\scriptstyle}cc}
      &   && b'=0 & b'=2 &\\
      &   && \beta_{b'+1}=4 & \beta_{b'+1}=3& \\[4pt]
    \begin{block}{>{\scriptstyle}c>{\scriptstyle}cc|cc|c}
    a'=3 & \alpha_{a'}+1=1 && e_3(x_1, x_2, x_3, x_4) & e_1(x_1, x_2, x_3)& \\
    a'=4 & \alpha_{a'}+1=1 && e_4(x_1, x_2, x_3, x_4) & e_2(x_1, x_2, x_3)&.\\
    \end{block}
\end{blockarray}
\]
Restating in terms of flagged Schur polynomials, the first determinant above is equal to \(s_{(2,2,1)}((1,2,2), (2,3,4))\) and the second is \(s^\ast_{(3,2)}((1,1), (4,3))\) (in the notation of \autoref{thm:intro_flagged_polynomial_identity}, we have \(\la = (2,2,1)\), \(\mu= (0^3)\), \(f = (1, 2, 2)\), \(g = (2, 3, 4)\), \(f^\ast = (1, 1)\) and \(g^\ast = (4, 3)\)).
\autoref{thm:intro_flagged_polynomial_identity} states that these flagged Schur polynomials are equal.
Equivalently (since the necessary conditions of \cite[Theorems 3.5 and 3.5*]{wachs1985flaggedSchur} are met), the sets of flagged tableaux of the form
\[
\begin{gathered}\begin{ytableau}
    \none    & \none    & \none       & \none \\
    \none[1] &          &             & \none[2]\\
    \none[2] &          &             & \none[3] \\
    \none[2] &          &    \none    & \none[4] \\
    \none    & \none    & \none       & \none
\end{ytableau}\end{gathered}
\text{\qquad and \quad} 
\begin{gathered}\begin{ytableau}
    \none    & \none[1] & \none[1]    & \none \\
    \none    &          &             & \none \\
    \none    &          &             & \none \\
    \none    &          &    \none    & \none \\
    \none    & \none[4] & \none[3]    & \none
\end{ytableau}\end{gathered}
\]
have equal generating functions.
\end{example}

\section{Proof of the main theorem}
\label{section:proof}

The steps in our proof are:
\begin{enumerate}[label={\arabic*)}]
    \item
    define two lattices and paths upon them;
    \item
    show that the weighted counts of tuples of paths on the lattices equal the determinants in the main theorem;
    \item
    construct a weight-preserving bijection from tuples of paths on one lattice to the other.
\end{enumerate}

We adopt the notation of \autoref{thm:intro_det_identity} throughout: let \(n\) be a nonnegative integer, let \(A\) and \(B\) be subsets of \(\intervalwz{n}\) of equal size with complements in \(\intervalwz{n}\) denoted \(A^\comp\) and \(B^\comp\), and let \(\alpha\) and \(\beta\) be partitions with \(n\) parts (with parts equal to zero permitted) such that \(\alpha_i \leq \beta_i\) for all \(i \in [n]\)
(though we do not assume that each part is at most \(1\) less than the preceding part).
Additionally, let \(l = \abs{A} = \abs{B}\), and let \(r = \abs{A^\comp} = \abs{B^\comp} = n+1-l\).

\subsection{Definition of lattices}
\label{section:lattices}

We picture Young diagrams as lying in a plane with the \(x\)-direction being downward and the \(y\)-direction being rightward, and the \(1 \times 1\) square whose bottom-right corner is the point \((i,j)\) is referred to as the \emph{box} \(\Ybox{i}{j}\).
(Though it is common to refer to a box simply by its coordinates, we denote boxes in this manner since we have need to distinguish between boxes and points.)
The skew Young diagram of \(\beta/\alpha\) is then \(\Y{\beta/\alpha} = \setbuild{ \Ybox{i}{j} }{1 \leq i \leq n,\ \alpha_i +1 \leq j \leq \beta_i}\).

We use \(\Y{\beta/\alpha}\) to construct two lattices, \(\Llattice\) and \(\Rlattice\).
In both lattices, the set of nodes is the set of points in the plane which occur as some corner of some box in \(\Y{\beta/\alpha}\).
The edges in each lattice are described below.
We give some edges a \emph{weight} which will be a formal variable.
The weight of a path is then given by the product of the weights of the steps, and the weight of a tuple of paths is the product of the weights of each path.

In \(\Llattice\), we have horizontal edges and vertical edges:
\begin{itemize}
    \item the horizontal edges are the horizontal sides of the boxes in \(\Y{\beta/\alpha}\), and are directed rightward;
    \item the vertical edges are the right-hand sides of the boxes in \(\Y{\beta/\alpha}\), are directed downward, and have weight \(x_j\) where \(j\) is the column of the corresponding box.
\end{itemize}
In  \(\Rlattice\), we have horizontal edges and diagonal edges:
\begin{itemize}
    \item
    the horizontal edges are the horizontal sides of the boxes in \(\Y{\beta/\alpha}\), and are directed leftward;
    \item
    the diagonal edges are the top-right to bottom-left diagonals of the boxes in \(\Y{\beta/\alpha}\), are directed left-and-downward, and have weight \(x_j\) where \(j\) is the column of the corresponding box.
\end{itemize}
An example of each of these lattices is depicted in \autoref{fig:lattices}.

\begin{figure}[htb]
    \centering
    \begin{subfigure}{0.49\linewidth}
        \centering
        \includegraphics[scale=0.8]{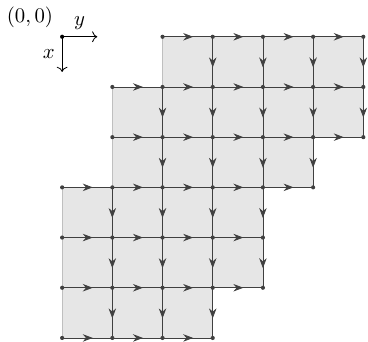}
        \caption{\(\Llattice\)}
        \label{subfig:L-lattice}
    \end{subfigure}
    \begin{subfigure}{0.49\linewidth}
        \centering
        \includegraphics[scale=0.8]{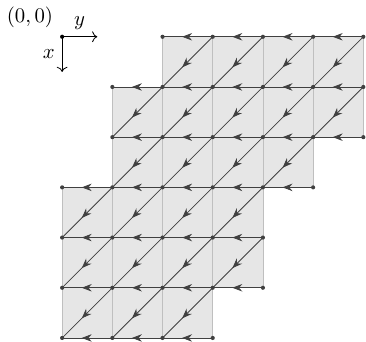}
        \caption{\(\Rlattice\)}
        \label{subfig:R-lattice}
    \end{subfigure}
    \caption{Our lattices when \(n=6\), \(\alpha = \alphapartition\) and \(\beta = \betapartition\).
    }
    \label{fig:lattices}
\end{figure}

We now define \emph{sources} and \emph{sinks} for paths on these lattices.

In \(\Llattice\), take as \emph{sources} the left-most node in each horizontal line indexed by \(A\),
and take as \emph{sinks} the right-most node in each horizontal line indexed by \(B\).
Explicitly, the sources are the points \(\setbuild{(a, \alpha_{a+1})}{a \in A}\) (where we interpret \(\alpha_{n+1} = \alpha_n\) if \(n \in A\)) and the sinks are the points \(\setbuild{(b, \beta_{b})}{b \in B}\) (where we interpret \(\beta_0 = \beta_1\) if \(0 \in B\)).

In \(\Rlattice\), take as \emph{sources} the right-most node in each horizontal line indexed by \(B^\comp\),
and take as \emph{sinks} the left-most node in each horizontal line indexed by \(A^\comp\). 
Explicitly, the sources are the points \(\setbuild{(b', \beta_{b'})}{b' \in B^\comp}\) (where we interpret \(\beta_0 = \beta_1\) if \(0 \in B^\comp\)) and the sinks are the points \(\setbuild{(a', \alpha_{a'+1})}{a' \in A^\comp}\) (where we interpret \(\alpha_{n+1} = \alpha_n\) if \(n \in A^\comp\)).

We are interested in tuples of paths in \(\Llattice\) and \(\Rlattice\) that join the sources to the sinks in a matching; we call such tuples in \(\Llattice\) \emph{blue connectors} and such tuples in \(\Rlattice\) \emph{red connectors}.
We furthermore describe sources, sinks and paths in \(\Llattice\) as \emph{blue} and in \(\Rlattice\) as \emph{red}.
A blue connector and a red connector are depicted in \autoref{fig:paths}.

\begin{figure}[htb]
    \centering
    \begin{subfigure}{0.495\linewidth}
        \centering
        \includegraphics[scale=0.8]{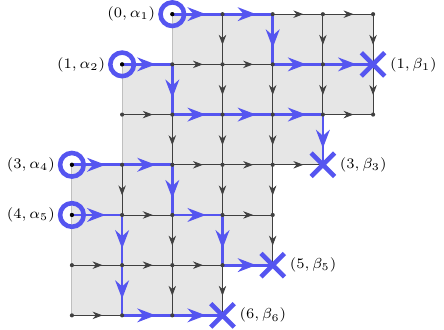}
        \caption{A blue connector.}
        \label{subfig:l-path}
    \end{subfigure}
    \begin{subfigure}{0.495\linewidth}
        \centering
        \includegraphics[scale=0.8]{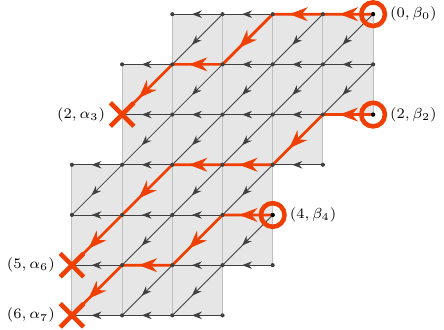}
        \caption{A red connector.}
        \label{subfig:r-path}
    \end{subfigure}
    \caption{Examples of a blue connector and a red connector, when \(n=6\), \(\alpha = \alphapartition\), \(\beta = \betapartition\),
    \(A = \set{0,1,3,4}\) and \(B = \set{1,3,5,6}\).
    Sources are indicated by circles, sinks by crosses.
    From the top of the diagrams to the bottom, the blue paths have weights \(x_4\), \(x_2x_5\), \(x_2x_3\) and \(x_1^2\) respectively, and the red paths have weights \(x_2x_4\), \(x_1x_2x_5\) and \(x_1x_3\) respectively.
    The weight of this blue connector, being the product of the weights of the blue paths, is \(x_1^2x_2^2x_3x_4x_5\), which is also the weight of this red connector.
    }
    \label{fig:paths}
\end{figure}

\subsection{Enumeration of connectors}

We count the \emph{non-intersecting} blue connectors and red connectors.
They key result we use for this is the Lindstr\"{o}m--Gessel--Viennot Lemma, stated below.
The lemma was first articulated in the context of Markov chains in \cite{karlin1959coincidence}, and later in the context of matroid theory in \cite{lindstroem1973matroids}.
It was subsequently used to deduce various combinatorial identities in \cite{gessel1985paths} and has had applications to the theory of orthogonal polynomials \cite{viennot1985orthogonal}.
A weighted combinatorial statement is given in \cite{stembridge1990nonintersecting}; for an illuminating illustration of the argument behind the lemma, see \cite{benjamin2005countingOnDeterminants}.

\begin{theorem}[Lindstr\"om--Gessel--Viennot Lemma]
\label{thm:lgv_lemma}
Let \(G\) be a directed acyclic graph with \(m\) designated sources and sinks, where \(m\) is a nonnegative integer.
Let \(M\) be the \(m \times m\) matrix whose \((i, j)\)th entry is the number of paths, counted with weight, from the \(i\)th source to the \(j\)th sink.
Suppose \(G\) is nonpermutable.
Then the number of non-intersecting \(m\)-tuples of paths from sources to sinks, counted with weight, is equal to the determinant of \(M\).
\end{theorem}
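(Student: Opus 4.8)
The plan is to give the classical sign-reversing involution proof, adapted to the present weighted setting. First I would expand the determinant over the symmetric group,
\[
\det M = \sum_{\sigma \in S_m} \operatorname{sgn}(\sigma) \prod_{i=1}^m M_{i,\sigma(i)},
\]
and replace each entry \(M_{i,\sigma(i)}\) by the weighted sum of paths from the \(i\)th source to the \(\sigma(i)\)th sink that it records. Interchanging the order of summation exhibits \(\det M\) as a signed weighted sum over all \(m\)-tuples \(P = (P_1,\ldots,P_m)\) in which each \(P_i\) runs from the \(i\)th source to some sink, the tuple \(P\) contributing \(\operatorname{sgn}(\sigma_P)\prod_i \operatorname{wt}(P_i)\), where \(\sigma_P \in S_m\) sends \(i\) to the index of the sink at which \(P_i\) ends (well-defined since distinct paths of a tuple must end at distinct sinks). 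I would then split this sum according to whether \(P\) is intersecting or non-intersecting. By the nonpermutability hypothesis, a non-intersecting tuple can only realise the identity permutation, so every such tuple carries sign \(+1\); these terms assemble precisely into the weighted count of non-intersecting tuples. It thus remains to show that the intersecting tuples contribute \(0\).

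For that I would construct a weight-preserving, sign-reversing involution on the set of intersecting tuples. Given an intersecting \(P\), let \(i\) be the least index such that \(P_i\) meets another path of the tuple, let \(v\) be the first vertex along \(P_i\) (in its direction of travel) that lies on another path, and let \(j\) be the least index with \(j > i\) and \(v \in P_j\); the minimality of \(i\) guarantees every path other than \(P_i\) through \(v\) has index exceeding \(i\), so such a \(j\) exists. Form a new tuple by splicing at \(v\): the new \(i\)th path follows \(P_i\) up to \(v\) and then \(P_j\) beyond \(v\), and symmetrically the new \(j\)th path follows \(P_j\) up to \(v\) and then \(P_i\) beyond. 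Acyclicity of \(G\) ensures the paths are finite and that the spliced walks are genuine directed paths; the new tuple uses the same multiset of edges as \(P\), hence the same weight, while its permutation is \(\sigma_P \cdot (i\ j)\), so the sign is negated. I would then verify the map is an involution: the segments \(P_1,\ldots,P_{i-1}\) and the initial portions of \(P_i\) and \(P_j\) up to \(v\) are untouched, and no new intersection is created before \(v\) on the new \(i\)th path, so re-running the recipe on the image recovers the same \(i\), \(v\) and \(j\) and the second splice undoes the first. Intersecting tuples therefore cancel in pairs, leaving \(\det M\) equal to the weighted count of non-intersecting tuples.

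I expect the main obstacle to be exactly the bookkeeping that makes this involution well-defined and self-inverse: one must argue carefully that \(v\) is unambiguously determined, that the splice introduces no earlier intersection among \(P_1,\ldots,P_{i-1}\) or along the initial segment of the new \(i\)th path (so that the data \((i,v,j)\) is preserved), and that ``the portion after \(v\)'' makes sense because a directed path in a DAG cannot revisit \(v\). I would also dispatch the degenerate cases explicitly: \(m = 0\), where the determinant is the empty product \(1\) and the empty tuple is vacuously non-intersecting; and vertices \(v\) lying on several paths at once, handled by the rule selecting the least admissible \(j\). Finally I would spell out where nonpermutability is used, namely to drop the sign from the surviving non-intersecting terms; without it one obtains at best \(\det M = \pm(\text{weighted count})\), or no clean identity at all.
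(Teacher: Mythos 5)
Your proof is correct: it is the standard sign-reversing tail-swapping involution, with the key well-definedness checks (that the data \((i,v,j)\) is preserved under the splice, and that nonpermutability is what forces the surviving non-intersecting tuples to carry sign \(+1\)) correctly identified. The paper does not actually prove this lemma --- it cites it to the literature (Karlin--McGregor, Lindstr\"om, Gessel--Viennot) --- and the argument given there is exactly the one you present, so there is nothing to reconcile.
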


Here, \emph{nonpermutable} means that a non-intersecting \(m\)-tuple of paths must connect the \(i\)th source to the \(i\)th sink.
The lattices \(\Llattice\) and \(\Rlattice\) are clearly acyclic and nonpermutable, so \autoref{thm:lgv_lemma} applies.
We thus want to count paths from the \(i\)th source to the \(j\)th sink in each lattice.

\begin{proposition}
\label{prop:l-path_count}
The weighted count of non-intersecting blue connectors in the lattice \(\Llattice\) is
\[
    \det\mleft(%
        h_{b - a}(x_{\alpha_{a+1}+1}, x_{\alpha_{a+1}+2}, \ldots, x_{\beta_{b}}) %
    \mright)_{a \in A,b \in B}.
\]
\end{proposition}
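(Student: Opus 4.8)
The plan is to apply the Lindstr\"{o}m--Gessel--Viennot lemma (\autoref{thm:lgv_lemma}), which we have already observed applies to \(\Llattice\), and then to evaluate the entries of the resulting matrix. By the lemma, the weighted count of non-intersecting blue connectors equals \(\det(M)\), where the \((a,b)\) entry of \(M\) (for \(a \in A\) and \(b \in B\)) is the weighted count of blue paths from the source \((a, \alpha_{a+1})\) to the sink \((b, \beta_b)\). So it suffices to prove that this weighted count equals \(h_{b-a}(x_{\alpha_{a+1}+1}, x_{\alpha_{a+1}+2}, \ldots, x_{\beta_b})\) for every such \(a\) and \(b\).

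Fix \(a\) and \(b\). A blue path moves only rightward and downward, so it reaches row \(b\) from row \(a\) only when \(b \geq a\), matching the convention \(h_{b-a} = 0\) for \(b < a\); assume \(b \geq a\). A blue path from the source to the sink takes exactly one downward step between rows \(i-1\) and \(i\) for each \(i\) with \(a < i \leq b\), in some column \(d_i\); this step is the right-hand side of the box \(\Ybox{i}{d_i}\), so it carries weight \(x_{d_i}\) and forces \(\alpha_i + 1 \leq d_i \leq \beta_i\), and since the path never moves leftward we get \(d_{a+1} \leq d_{a+2} \leq \cdots \leq d_b\). The weight of the whole path is \(x_{d_{a+1}} x_{d_{a+2}} \cdots x_{d_b}\). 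I would then show that sending a blue path to the tuple \((d_{a+1}, \ldots, d_b)\) is a bijection onto the set of weakly increasing integer tuples with \(\alpha_{a+1}+1 \leq d_{a+1}\) and \(d_b \leq \beta_b\); summing weights over all blue paths then gives \(\sum x_{d_{a+1}} \cdots x_{d_b}\) ranging over exactly these tuples, which is by definition \(h_{b-a}(x_{\alpha_{a+1}+1}, \ldots, x_{\beta_b})\), the sum of all degree-\((b-a)\) monomials in those variables. This also covers the degenerate case in which the variable set is empty, i.e.\ \(\beta_b \leq \alpha_{a+1}\): if \(b > a\) there are then no admissible tuples, in agreement with \(h_{b-a} = 0\); and if \(b = a\) the empty tuple corresponds to the unique (possibly trivial) path, of weight \(1 = h_0\).

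The bijection is where the work lies. The remaining constraints \(\alpha_i + 1 \leq d_i \leq \beta_i\) at intermediate indices hold automatically for any path, since \(d_i \geq d_{a+1} \geq \alpha_{a+1}+1 \geq \alpha_i + 1\) and \(d_i \leq d_b \leq \beta_b \leq \beta_i\) as \(\alpha\) and \(\beta\) are partitions. Injectivity is clear, because between consecutive downward steps a path runs horizontally with no choice. For surjectivity one constructs, from a given tuple, the path travelling rightward along row \(a\) to column \(d_{a+1}\), then stepping down and travelling rightward along row \(i-1\) to column \(d_i\) for each \(i\) in turn, and finally travelling rightward along row \(b\) to the sink, and one must check that every horizontal edge it uses is present in \(\Llattice\). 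Here the hypothesis that each part of \(\alpha\) and of \(\beta\) is at most one less than the preceding part is essential: it guarantees that every horizontal line of \(\Llattice\) is connected (equivalently, that the column ranges of consecutive rows of \(\Y{\beta/\alpha}\) leave no gap in the horizontal edges), which is exactly what makes the required horizontal stretches traversable — in particular the one joining the source and sink in the case \(b = a\). Verifying this connectivity, together with attending to the boundary conventions for \(\alpha_{n+1}\) and \(\beta_0\), is the main point requiring care; with the bijection established, \(M_{a,b} = h_{b-a}(x_{\alpha_{a+1}+1}, \ldots, x_{\beta_b})\) and the proposition follows from \autoref{thm:lgv_lemma}.
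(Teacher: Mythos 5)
Your overall strategy is the same as the paper's: apply \autoref{thm:lgv_lemma} and identify the blue paths from \((a,\alpha_{a+1})\) to \((b,\beta_b)\) with weakly increasing column sequences \(\alpha_{a+1}+1 \leq d_{a+1} \leq \cdots \leq d_b \leq \beta_b\), whose weighted sum is \(h_{b-a}(x_{\alpha_{a+1}+1},\ldots,x_{\beta_b})\). The genuine problem is your treatment of surjectivity. You invoke the hypothesis that each part of \(\alpha\) and \(\beta\) is at most one less than the preceding part and call it essential, but that hypothesis is not available here: it appears only in the weakened introductory statement (\autoref{thm:intro_det_identity}), not in the standing assumptions of \autoref{section:proof}, and this proposition must hold without it, because it is combined with \autoref{prop:r-path_count} to prove \autoref{thm:main_theorem}, whose only additional hypothesis is the parallelogram condition on pairs from \(A^\comp\) and \(B^\comp\). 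A proof that leans on the staircase condition establishes a statement too weak to support the main theorem.

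Moreover, for \(a < b\) no such condition is needed. The path built from a tuple \((d_{a+1},\ldots,d_b)\) uses only edges that are sides of boxes \(\Ybox{i}{j}\) with \(a+1 \leq i \leq b\) and \(\alpha_{a+1}+1 \leq j \leq \beta_b\), and every such box lies in \(\Y{\beta/\alpha}\) because \(\alpha_i \leq \alpha_{a+1}\) and \(\beta_i \geq \beta_b\) for those \(i\); this is exactly the paper's rectangle argument. So connectivity of entire horizontal lines of \(\Llattice\) is irrelevant in that case: connectivity inside the rectangle comes for free from the monotonicity of the partitions. The one place your worry does bite is the diagonal case \(a = b\), where the claimed unique horizontal path from \((a,\alpha_{a+1})\) to \((a,\beta_a)\) exists only if the column ranges of rows \(a\) and \(a+1\) leave no gap in the level-\(a\) edges (for instance it fails for \(\alpha = (5,0)\), \(\beta = (6,2)\), \(a=b=1\)); the paper's proof glosses over this point as well. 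You should separate the two situations: prove the \(a<b\) case via the rectangle containment, using only that \(\alpha\) and \(\beta\) are partitions, and flag the \(a=b\) connectivity issue as a caveat about degenerate (disconnected) skew shapes rather than importing a hypothesis the proposition does not carry.
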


\newcommand{\rightbox}{(b, \beta_{b})}
\newcommand{\leftbox}{(a, \alpha_{a+1})}

\begin{proof}
By \autoref{thm:lgv_lemma}, it suffices to show that the weighted count of paths from \(\leftbox\) to \(\rightbox\) is \(h_{b - a}(x_{\alpha_{a+1}+1}, x_{\alpha_{a+1}+2}, \ldots, x_{\beta_{b}})\), for all \(a \in A\) and \(b \in B\).

Suppose first that \(a \geq b\).
If the inequality is strict, then \(\leftbox\) is below \(\rightbox\), so there are no paths between them, and \(h_{b - a} = 0\) as required.
If equality holds, then \(\leftbox\) and \(\rightbox\) are in the same row so there is a unique horizontal path between them, and \(h_{b-a} = 1\) as required.

Suppose next \(a < b\) and \(\alpha_{a+1} \geq \beta_{b}\).
Then \(\leftbox\) is further right that \(\rightbox\) (or in the same column, but without vertical edges joining them), so there are no paths between them.
Meanwhile the corresponding polynomial is a symmetric function over an empty range of variables and hence is zero, as required.

Now suppose \(a < b\) and \(\alpha_{a+1} < \beta_{b}\).
A path from \(\leftbox\) to \(\rightbox\) must make exactly \(b - a\) vertical steps.
Observe that the vertical steps in such a path must be made down (the right-hand sides of) boxes in the rectangular region whose vertices are the boxes \(\Ybox{a+1}{\alpha_{a+1}+1}\), \(\Ybox{a+1}{\beta_{b}}\), \(\Ybox{b}{\beta_{b}}\) and \(\Ybox{b}{\alpha_{a+1}+1}\), as exemplified in \autoref{fig:rectangular_region}.
Since \(\alpha\) and \(\beta\) are partitions, we have \(\alpha_{i} \leq \alpha_{a+1}\) and \(\beta_{i} \geq \beta_{b}\) for all \(a+1 \leq i \leq b\), so all these boxes are indeed contained in the Young diagram \(\Y{\beta/\alpha}\).

\begin{figure}[htb]
    \centering
    \includegraphics{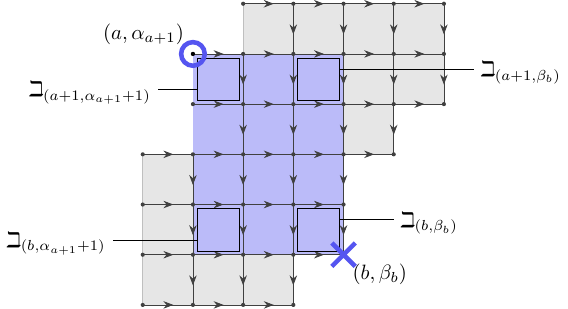}
    \caption{The collection of boxes which must contain the vertical steps of any path from \((a, \alpha_{a+1})\) to \((b,\beta_b)\) when \(a=1\), \(b=5\), \(n=6\), \(\alpha = \alphapartition\) and \(\beta = \betapartition\).
    }
    \label{fig:rectangular_region}
\end{figure}

Thus the choice of \(b-a\) columns in which vertical steps take place can be made freely with repetition from \(\alpha_{a+1}+1, \alpha_{a+1}+2, \ldots, \beta_{b}\) (and each such choice uniquely determines a path).
Then since the vertical edges in column \(j\) each have weight \(x_j\), the weighted count of possible paths is the required polynomial.
\end{proof}

\newcommand{\parallelogramcondition}{%
Suppose that for all \(a' \in A^\comp\) and \(b' \in B^\comp\), either:
\begin{itemize}
    \item
    \(a' - b' \leq 0\); or
    \item
    \(a' - b' > \beta_{b'+1} - \alpha_{a'}\); or
    \item
    for all \(i\) such that \(b'+1 \leq i \leq a'\), we have \(\alpha_{i} - \alpha_{a'} \leq a'-i\) and \(\beta_{b'+1} - \beta_i \leq i-b'-1\).
\end{itemize}
}

\begin{samepage}
\begin{proposition}
\label{prop:r-path_count}
\parallelogramcondition
Then the weighted count of non-intersecting red connectors in the lattice \(\Rlattice\) is
\[
    \det\mleft(%
        e_{a' - b'} (x_{\alpha_{a'}+1}, x_{\alpha_{a'}+2}, \ldots, x_{\beta_{b'+1}}) %
    \mright)_{a' \in A^\comp,b' \in B^\comp}.
\]
\end{proposition}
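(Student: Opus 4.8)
The plan is to mirror the proof of \autoref{prop:l-path_count}. By \autoref{thm:lgv_lemma} it suffices to show that, for each \(a' \in A^\comp\) and \(b' \in B^\comp\), the weighted count of paths in \(\Rlattice\) from the source \((b', \beta_{b'})\) to the sink \((a', \alpha_{a'+1})\) equals \(e_{a' - b'}(x_{\alpha_{a'}+1}, x_{\alpha_{a'}+2}, \ldots, x_{\beta_{b'+1}})\). First I would dispose of the degenerate situations, corresponding to the first bullet of the hypothesis: if \(a' < b'\) there is no path, since every step of \(\Rlattice\) weakly increases the row while the source lies below the sink, and \(e_{a'-b'} = 0\); if \(a' = b'\) the source and sink lie in a common row of the diagram and are joined by a unique horizontal (hence weight-\(1\)) path, matching \(e_0 = 1\).

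For the main case \(a' > b'\), the key observation --- analogous to the fact that a blue path from \((a,\alpha_{a+1})\) to \((b,\beta_b)\) makes \(b-a\) vertical steps in a rectangular region --- is that a path from \((b',\beta_{b'})\) to \((a',\alpha_{a'+1})\) makes exactly \(a'-b'\) diagonal steps, that these occur in \emph{strictly} decreasing columns \(d_1 > d_2 > \cdots > d_{a'-b'}\) (a leftward horizontal run separates consecutive diagonal steps, and a diagonal step itself decreases the column), and that the tuple \((d_1,\ldots,d_{a'-b'})\) determines the path uniquely, with weight \(x_{d_1} x_{d_2} \cdots x_{d_{a'-b'}}\). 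So it remains to identify the set of attainable tuples. One inclusion holds unconditionally: the first diagonal step lies in a box of row \(b'+1\) and the last in a box of row \(a'\), which together with strict decrease forces \(\alpha_{a'}+1 \leq d_{a'-b'}\) and \(d_1 \leq \beta_{b'+1}\), so the \(d_k\) form an \((a'-b')\)-element subset of \(\set{\alpha_{a'}+1, \ldots, \beta_{b'+1}}\). When the second bullet of the hypothesis holds this set has fewer than \(a'-b'\) elements, so no path exists, while \(e_{a'-b'}\) of fewer than \(a'-b'\) variables is zero, as required.

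The reverse inclusion --- that every such subset is attainable --- is the heart of the argument and is where the third bullet enters. Given a strictly decreasing tuple in \(\set{\alpha_{a'}+1, \ldots, \beta_{b'+1}}\), the diagonal steps it prescribes are forced (again using strict decrease together with the endpoint bounds) to lie in the parallelogram-shaped family of boxes \(\Ybox{i}{j}\) with \(b' < i \leq a'\) and \(\alpha_{a'} + (a'-i) < j \leq \beta_{b'+1} + (b'+1-i)\); the two inequalities \(\alpha_i - \alpha_{a'} \leq a'-i\) and \(\beta_{b'+1} - \beta_i \leq i - b' - 1\) of the third bullet say precisely that the left and right boundaries of this parallelogram stay within the rows of \(\Y{\beta/\alpha}\), so the whole region lies in the skew diagram. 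It then remains to check that the horizontal runs joining consecutive diagonal steps, and the initial and final runs, also lie in \(\Y{\beta/\alpha}\); this follows because the boxes of \(\Y{\beta/\alpha}\) in any fixed row form a contiguous block, so a leftward run between two columns both lying in (the relevant part of) that block stays in the diagram. Summing \(x_{d_1}\cdots x_{d_{a'-b'}}\) over all \((a'-b')\)-subsets of \(\set{\alpha_{a'}+1,\ldots,\beta_{b'+1}}\) then yields \(e_{a'-b'}(x_{\alpha_{a'}+1}, \ldots, x_{\beta_{b'+1}})\).

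I expect the main obstacle to be the bookkeeping in this reverse inclusion: pinning down the exact parallelogram, verifying from the hypothesis that it --- together with the connecting horizontal runs --- sits inside \(\Y{\beta/\alpha}\), and confirming the tuple-to-path correspondence is genuinely bijective. It is worth noting that the reason \(e\) rather than \(h\) appears here, in contrast to \autoref{prop:l-path_count}, is precisely that diagonal steps move left as they move down and so occupy \emph{distinct} columns, whereas the vertical steps of a blue path may repeat columns.
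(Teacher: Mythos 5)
Your proposal is correct and follows essentially the same route as the paper: reduce via the Lindstr\"{o}m--Gessel--Viennot lemma to counting single paths, dispose of the cases covered by the first two bullets of the hypothesis, and in the main case identify paths with strictly decreasing tuples of diagonal-step columns drawn from \(\set{\alpha_{a'}+1,\ldots,\beta_{b'+1}}\), with the third bullet being exactly the condition that the resulting parallelogram of boxes lies in \(\Y{\beta/\alpha}\). You are in fact somewhat more explicit than the paper, which never verifies that the connecting horizontal runs exist; be aware, though, that your justification for the \emph{initial and final} runs (contiguity of a single row's block of boxes) is not quite airtight, since those runs traverse edges belonging to rows \(b'\) and \(b'+1\) (respectively \(a'\) and \(a'+1\)) whose column-intervals the hypothesis does not force to overlap --- a subtlety on which the paper's own proof is equally silent.
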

\end{samepage}

\renewcommand{\rightbox}{(b', \beta_{b'})}
\renewcommand{\leftbox}{(a', \alpha_{a'+1})}

\begin{proof}
By \autoref{thm:lgv_lemma}, it suffices to show that the weighted count of paths from \(\rightbox\) to \(\leftbox\) is \(e_{a' - b'} (x_{\alpha_{a'}+1}, x_{\alpha_{a'}+2}, \ldots, x_{\beta_{b'+1}})\), for all \(a' \in A^\comp\) and all \(b' \in B^\comp\).

Suppose first that \(a' \leq b'\).
If the inequality is strict, then \(\rightbox\) is below \(\leftbox\), so there are no paths between them, and \(e_{a' - b'} = 0\) as required.
If equality holds, then \(\rightbox\) and \(\leftbox\) are in the same row so there is a unique horizontal path between them, and \(e_{a'-b'} = 1\) as required.

Suppose next that
\(a' - b' > \beta_{b'+1} - \alpha_{a'}\).
Then any path starting at \(\rightbox\) reaches the left-hand side of the lattice and terminates before it reaches the \(a'\)th row, so there are no paths between \(\rightbox\) and \(\leftbox\). Meanwhile the corresponding polynomial is an elementary symmetric function in fewer variables than its degree and hence is zero, as required.

Now suppose
\(a' - b' \leq \beta_{b'} - \alpha_{a'}\).
A path from \(\rightbox\) to \(\leftbox\) must make exactly \(a'- b'\) diagonal steps.
Observe that the diagonal steps in such a path must be made across boxes in the parallelogram-shaped region whose vertices are the boxes \(\Ybox{b'+1}{\beta_{b'+1}}\), \(\Ybox{b'+1}{\alpha_{a'}+a'-b'}\), \(\Ybox{a'}{\alpha_{a'}+1}\) and \(\Ybox{a'}{\beta_{b'+1}+b'-a'+1}\), as exemplified in \autoref{fig:parallelogram_region}.
The condition for this collection of boxes to be contained in the Young diagram is that for all \(i \in \set{b'+1, b'+2, \ldots, a'}\) we have \(\alpha_i \leq \alpha_{a'} + (a'-i)\) and \(\beta_i \geq \beta_{b'+1} - (i-b'-1)\), which is exactly the assumed condition on \(\alpha\) and \(\beta\).

\begin{figure}[htb]
    \centering
    \includegraphics{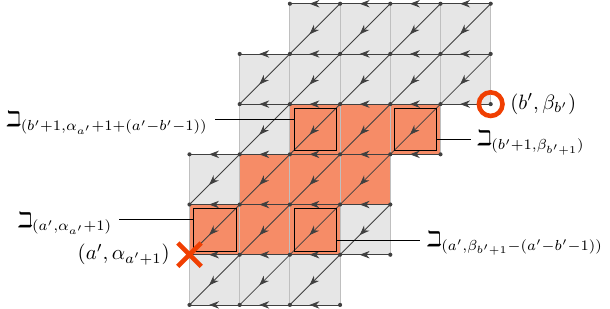}
    \caption{The collection of boxes which must contain the diagonal steps of any path from \((b', \beta_{b'})\) to \((a',\alpha_{a'+1})\) when \(a'=5\), \(b'=2\), \(n=6\), \(\alpha = \alphapartition\) and \(\beta = \betapartition\).
    }
    \label{fig:parallelogram_region}
\end{figure}

Thus the choice of \(a'-b'\) columns in which diagonal steps take place can be made freely without repetition from \(\alpha_{a'}+1\), \(\alpha_{a'}+2\), \ldots, \(\beta_{b'+1}\) (and each such choice uniquely determines a path).
Then since the diagonal edges in column \(j\) each have weight \(x_j\), the weighted count of possible paths is the required polynomial.
\end{proof}

\subsection{Bijection between connectors}
\label{section:path_bijection}

We now define a bijection between the non-intersecting blue connectors in \(\Llattice\) and the non-intersecting red connectors in \(\Rlattice\), for any skew-partition \(\beta/\alpha\).
(In fact, the arguments in this section hold for \(\alpha\) and \(\beta\) any compositions such that \(\alpha_i \leq \beta_i\) for all \(i\).)

It is convenient to overlay the lattices \(\Llattice\) and \(\Rlattice\), so we can compare paths on one with paths on the other.

The bijection is via the following construction.
We define the construction of a red connector from a blue connector; the inverse construction is analogous.

\begin{definition}
Given a non-intersecting blue connector, define the \emph{complementary} red connector to be the collection of paths constructed as follows:
beginning at each red source, take a horizontal step from each node unless the blue connector takes a vertical step from that node,
in which case take a diagonal step.
\end{definition}

\begin{example}
The red connector in \autoref{subfig:r-path} is complementary to the blue connector in \autoref{subfig:l-path}, as illustrated in \autoref{fig:complementary_paths}.
\begin{figure}[htb]
    \centering
    \includegraphics{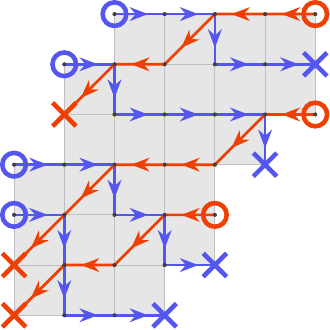}
    \caption{The blue connector and red connector from \autoref{fig:paths} overlayed, illustrating that the red connector is complementary to the blue connector.
    }
    \label{fig:complementary_paths}
\end{figure}
\end{example}

It is not obvious that the complementary red connector is indeed a red connector (that is, that each path reaches a distinct red sink).
We will show that it is, and that it is non-intersecting.
To do so, we use the following lemma.

\begin{lemma}
\label{lemma:complementary_lemma}
A non-intersecting blue connector and its complementary red connector intersect only at nodes from which a vertical blue step is taken.
\end{lemma}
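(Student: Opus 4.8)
The plan is to walk along each path of the complementary red connector, starting at its source, and to maintain the invariant that every node met so far is either a \emph{pivot}---a node from which the blue connector takes a vertical step---or a node lying on no blue path at all. This invariant is suited to an induction along a red path, because by construction the complementary red connector leaves a node by a diagonal step exactly when that node is a pivot, and by a horizontal step otherwise. And it suffices for the lemma: if a node $v$ lies on both a red path and a blue path, the invariant (applicable since $v$ is on a red path) forces $v$ to be a pivot, the other alternative being ruled out---which is precisely the assertion.

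For the base case, let $v_0$ be a red source, say the rightmost node of the horizontal line indexed by $b' \in B^\comp$. Being the rightmost node of its line, $v_0$ has no outgoing rightward edge; and $v_0$ is not a blue sink, since blue sinks lie on the lines indexed by $B$ and $b' \notin B$. So any blue path meeting $v_0$ cannot terminate there and cannot leave rightward, hence leaves by a downward step: $v_0$ is a pivot, and the invariant holds at $v_0$.

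For the inductive step, suppose the invariant holds for every node of a red path up to and including a node $u$, let $v$ be the next node on that path, and assume for contradiction that $v$ lies on a blue path $P$ but is not a pivot; then no blue path---in particular not $P$---leaves $v$ downward. First I would observe that the node immediately to the right of $v$ is a node of the lattice, and identify it: if the red step $u \to v$ was horizontal then $v$ sits one step left of $u$ on $u$'s line, so this node is $u$; if the red step was diagonal then $u$ is a pivot, so some blue path $P_0$ descends out of $u$ to a node $w$ directly below $u$, and $v$ sits one step left of $w$ on $w$'s line, so this node is $w$. In particular $v$ is not the rightmost node of its line, hence not a blue sink, so $P$ does not terminate at $v$; and not leaving $v$ downward, it leaves $v$ rightward, to the node just identified. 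In the horizontal case this puts $u$ on $P$; but $u$ is not a pivot (the red step out of it being horizontal), so by the inductive hypothesis $u$ lies on no blue path---a contradiction. In the diagonal case this puts $w$ on $P$; but $P$ enters $w$ from the left while $P_0$ enters $w$ from above, so (a path entering each node from only one direction) $P$ and $P_0$ are distinct blue paths meeting at $w$, contradicting that the blue connector is non-intersecting. In either case we reach a contradiction, so the invariant holds at $v$, completing the induction and hence the proof.

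The genuinely creative step is finding the invariant; once it is in hand the rest is bookkeeping, and the main thing to get right is the identification, in each of the two cases, of the node immediately to the right of $v$ and its recognition as a node already controlled---by the inductive hypothesis in the horizontal case, by non-intersection of the blue connector in the diagonal case. One should also check the degenerate rows permitted by the conventions $\beta_0 = \beta_1$ and $\alpha_{n+1} = \alpha_n$. I note that the argument uses nothing about $\alpha$ and $\beta$ beyond $\alpha_i \leq \beta_i$---only that sources and sinks are extremal nodes of their lines---so, as the paper remarks, it goes through verbatim for $\alpha$ and $\beta$ arbitrary compositions.
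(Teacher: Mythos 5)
Your proof is correct and is essentially the paper's argument in inductive clothing: the paper takes a right-most node violating the conclusion and derives a contradiction from the incoming red step, while you walk each red path from its (right-most) source and locate the first violation, and the two cases of your inductive step (horizontal versus diagonal red step into $v$, contradicting respectively the control of the node to the right and the non-intersection of the blue connector) are exactly the paper's two cases. The only cosmetic differences are that you handle red sources as a base case where the paper deduces the bad node is not a red source, and you rule out $v$ being a blue sink via the node to its right where the paper uses the right-most nodes of $\Rlattice$.
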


\begin{proof}
Suppose, towards a contradiction, there exists a node which lies in both a non-intersecting blue connector and its complementary red connector and from which there is not a vertical blue step.
Consider a right-most such node \((i,j)\).

First observe \((i,j)\) is not a blue sink: in \(\Rlattice\), the right-most nodes are not the endpoints of any edges, so no path in \(\Rlattice\) starting at a red source can contain a blue sink.

Therefore there must be a horizontal blue step from \((i,j)\) to \((i,j+1)\).
In particular, \((i,j)\) is not right-most in its row, so it is not a red source.
Thus there is a red step to \((i,j)\), either horizontally from \((i,j+1)\) or diagonally from \((i-1,j+1)\).
We explain, and illustrate beneath each explanation, how both of these possibilities lead to a contradiction.

If the red step is horizontal, then by the construction of the complementary red connector there is no vertical blue step from \((i,j+1)\).
This contradicts our choice of \((i,j)\) as the right-most intersection from which there is not a vertical blue step.

\begin{center}
\begin{tikzpicture}
\draw[thick,color=\blue,postaction={half mid arrow}] (0.5-\nudge,0) -- (0.5-\nudge,1);
\draw[thick,color=\red,postaction={half mid arrow}] (0.5+\nudge,1) -- (0.5+\nudge,0);

\node at (0.5,2.4) {\(\implies\)};

\draw[thick,color=\blue,postaction={half mid arrow}] (0-\nudge,4) -- (0-\nudge,5);
\draw[thick,color=\red,postaction={half mid arrow}] (0+\nudge,5) -- (0+\nudge,4);
\draw[thick,color=\paleblue,postaction={mid arrow}] (0,5) -- (1,5);
\node[cross out, line width=0.9pt, draw=\grey, minimum size=0.32cm, at={(0.45,5)}] {};

\foreach \x/\y in {0.5/0, 0.5/1, 1/4, 1/5, 0/4, 0/5} {
    \drawnode{(\x,\y)}
}
\node[at={(0.5,0)},label={[shift={(0.8,0)}]\(\scriptstyle(i,j)\)}] {};
\node[at={(0,4)},label={[shift={(0.8,0)}]\(\scriptstyle(i,j)\)}] {};

\end{tikzpicture}
\end{center}

If the red step is diagonal, then by the construction of the complementary red connector, there is a vertical blue step from \((i-1, j+1)\) to \((i,j+1)\). This contradicts that the blue connector is non-intersecting.

\begin{center}
\begin{tikzpicture}
\draw[thick,color=\blue,postaction={mid arrow}] (0,0) -- (0,1);
\draw[thick,color=\red,postaction={mid arrow}] (-1,1) -- (0,0);

\node at (-0.5,2.4) {\(\implies\)};

\draw[thick,color=\blue,postaction={mid arrow}] (0,4) -- (0,5);
\draw[thick,color=\red,postaction={mid arrow}] (-1,5) -- (0,4);
\draw[thick,color=\blue,postaction={mid arrow}] (-1,5) -- (0,5);

\foreach \x/\y in {-1/0, -1/1, 0/0, 0/1, -1/4, -1/5, 0/4, 0/5} {
    \drawnode{(\x,\y)}
}
\node[at={(0,0)},label={[shift={(0.8,0)}]\(\scriptstyle(i,j)\)}] {};
\node[at={(0,4)},label={[shift={(0.8,0)}]\(\scriptstyle(i,j)\)}] {};

\end{tikzpicture}
\end{center}

In either case we have a contradiction, so no such intersection exists.
\end{proof}

\begin{proposition}
\label{prop:complements_exist}
The complementary red connector to a non-intersecting blue connector is a non-intersecting red connector.
\end{proposition}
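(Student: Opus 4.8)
The plan is to verify the two defining properties of a non-intersecting red connector: that the $r$ constructed red paths are pairwise non-intersecting, and that they join the red sources to the red sinks by a matching, each path actually reaching a red sink. The constant workhorse will be \autoref{lemma:complementary_lemma}, used alongside the tautology that, along a complementary red connector, a diagonal step is taken from a (red-path) node if and only if the blue connector takes a vertical step from that node.

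For non-intersection I would argue by contradiction. Suppose two constructed red paths share a node, and choose a shared node $(i,j)$ with the largest possible column $j$. Every edge of $\Rlattice$ strictly decreases the column, and the right-most node of each horizontal line is the head of no edge, so $(i,j)$ is not a red source; hence both paths reach $(i,j)$ by a step from a node in column $j+1$. If both these incoming steps were horizontal, or both diagonal, the two paths would already share a node in column $j+1$, contradicting maximality of $j$; so one path reaches $(i,j)$ by a horizontal step from $(i,j+1)$ and the other by a diagonal step from $(i-1,j+1)$. The diagonal step forces the blue connector to take a vertical step from $(i-1,j+1)$, which ends at $(i,j+1)$; the horizontal step forces the blue connector to take \emph{no} vertical step from $(i,j+1)$. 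Thus $(i,j+1)$ lies on both the blue connector and a complementary red path and yet carries no vertical blue step, contradicting \autoref{lemma:complementary_lemma}. (No single path self-intersects, since columns strictly decrease along it.)

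To see each constructed path runs from a red source to a red sink, I would first note that every prescribed step exists in $\Rlattice$: a prescribed diagonal step is the diagonal of the box whose right side is the blue vertical step prompting it, and a prescribed horizontal step leaves a node that is the right endpoint of a box's horizontal side, so — modulo the subtlety discussed below — a path can only fail to continue on reaching the left-most node of some horizontal line. Such a node is a red sink or a blue source; but the blue connector never takes a vertical step from a blue source (that step would be the right side of a box in the column of the source's own left boundary, which is impossible), so by \autoref{lemma:complementary_lemma} a complementary red path never visits a blue source. Since each path is finite — columns strictly decrease and are bounded below — it terminates, necessarily at a red sink. The $r$ paths then start at the $r$ distinct red sources and end at red sinks, and being non-intersecting they end at distinct red sinks, hence at all $r$ of them, giving a matching; as $\Rlattice$ is nonpermutable (noted for \autoref{thm:lgv_lemma}) this is the standard matching, so the complementary red connector is a non-intersecting red connector.

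I expect the real obstacle to be the bracketed clause above: that a horizontal step is available from every node which is not the left-most of its horizontal line. A node can lie strictly inside a horizontal line yet be only a left corner of boxes — this occurs where the left boundary of $\beta/\alpha$ juts back outward between consecutive rows — and at such a ``re-entrant'' node neither the prescribed horizontal step nor any diagonal step exists. The work is therefore to show a complementary red path never reaches such a node; this is where the hypotheses constraining how far the boundaries of $\beta/\alpha$ may move between consecutive rows enter, and once it is established the argument above goes through unchanged.
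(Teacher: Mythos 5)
Your two completed steps coincide with the paper's proof almost line for line: the non-intersection argument via a right-most shared node, the observation that the two incoming red steps must be one horizontal and one diagonal, and the resulting contradiction with \autoref{lemma:complementary_lemma} at \((i,j+1)\) are exactly the paper's; so is the termination argument that complementary red paths avoid blue sources (because no vertical blue step leaves a left-most node) and that blue sources and red sinks partition the left-most nodes of the lattice.

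The step you leave open is, however, a genuine gap --- and it is a gap in the paper's own proof as well, which passes straight from ``the complementary connector contains no blue sources'' to ``its paths reach red sinks'', tacitly assuming that each constructed path can only halt at the left-most node of its horizontal line. Your ``re-entrant'' nodes are exactly the nodes \((i,\alpha_i)\) with \(\beta_{i+1}<\alpha_i\) (consecutive rows of \(\beta/\alpha\) failing to meet), and there the construction genuinely breaks: take \(n=2\), \(\alpha=(2,0)\), \(\beta=(3,1)\), \(A=\set{0}\), \(B=\set{1}\). The unique non-intersecting blue connector is \((0,2)\to(0,3)\to(1,3)\); its complementary red path from the source \((0,3)\) is forced onto the diagonal step to \((1,2)\), which has no outgoing edge in \(\Rlattice\) and is not a red sink (indeed \(\Rlattice\) here admits no red connectors at all, while there is one blue connector, so no bijection can exist). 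Hence the proposition is false at the level of generality claimed at the start of \autoref{section:path_bijection}; some condition such as \(\alpha_i\leq\beta_{i+1}\) for all \(i\) is needed, under which the only nodes of \(\Rlattice\) without outgoing edges are the left-most nodes of their lines and your argument closes. So your diagnosis of where the difficulty lies is correct, but as written the proposal (like the paper's proof) is incomplete, and the constraint you anticipate borrowing is not actually among the hypotheses of the statement being proved.
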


\begin{proof}
First observe that in \(\Llattice\) the only edges directed out of the left-most nodes are horizontal, and thus the first step in every blue path is horizontal.
Therefore, by \autoref{lemma:complementary_lemma}, the complementary red connector does not contain any blue sources.
Since the blue sources and the red sinks partition the left-most nodes in the lattices, we deduce that the paths of the complementary red connector reach red sinks (not necessarily distinct).

We next show that the complementary red connector is non-intersecting.
This implies that the red sinks the red paths reach are distinct, and hence that it is indeed a red connector.

Suppose, towards a contradiction, that the complementary red connector has an intersection.
Consider a right-most intersection \((i,j)\).
Note that \((i,j)\) cannot be a red source (or a blue sink): if it were, then it would be the right-most node in its row, and so there would be no edges from \((i,j+1)\) or \((i-1,j+1)\) for a red path to arrive from (and so it could not be an intersection of two red paths).

Since we assumed \((i,j)\) to be a right-most intersection, the incoming red paths must come from distinct vertices: there is both a horizontal and a diagonal red step to \((i,j)\).
Then, by the construction of the complementary red connector and as illustrated below, there must be a vertical blue step to \((i,j+1)\) and there cannot be a vertical blue step out of \((i,j+1)\).
\begin{center}
\begin{tikzpicture}
\draw[thick,color=\red,postaction={mid arrow}] (0,1) -- (0,0);
\draw[thick,color=\red,postaction={mid arrow}] (-1,1) -- (0,0);

\node at (0,2.2) {\(\implies\)};

\draw[thick,color=\red,postaction={mid arrow}] (0,5) -- (0,4);
\draw[thick,color=\red,postaction={mid arrow}] (-1,5) -- (0,4);
\draw[thick,color=\blue,postaction={mid arrow}] (-1,5) -- (0,5);
\draw[thick,color=\paleblue,postaction={mid arrow}] (0,5) -- (1,5);
\node[cross out, line width=0.9pt, draw=\grey, minimum size=0.32cm, at={(0.45,5)}] {};

\foreach \x/\y in {0/0, 0/1, -1/0, -1/1, 1/0, 1/1, 0/4, 0/5, -1/4, -1/5, 1/4, 1/5} {\drawnode{(\x,\y)}
}

\node[at={(0,0)},label={[shift={(0.5,-0.5)}]\(\scriptstyle(i,j)\)}] {};
\node[at={(0,4)},label={[shift={(0.5,-0.5)}]\(\scriptstyle(i,j)\)}] {};
\end{tikzpicture}
\end{center}
Then \((i,j+1)\) lies in both the blue connector and its complementary red connector, but there is no vertical blue step out of it, contradicting \autoref{lemma:complementary_lemma}.
\end{proof}

\begin{proposition}
The map from the set of non-intersecting blue connectors to the set of non-intersecting red connectors defined by taking the complementary red connector is a weight-preserving bijection.
\end{proposition}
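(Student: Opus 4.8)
The plan is to exhibit the inverse map and then reduce both the bijectivity and the weight-preservation to a single combinatorial identity describing where vertical blue steps and diagonal red steps occur.

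Well-definedness of the map is exactly \autoref{prop:complements_exist}. The inverse map \(\Psi\) is defined by the construction analogous to that of the complementary red connector: from a non-intersecting red connector \(Q\), build a blue connector by beginning at each blue source and, from each node reached, taking a horizontal step unless \(Q\) takes a diagonal step from that node, in which case taking a vertical step. Running the arguments of \autoref{lemma:complementary_lemma} and \autoref{prop:complements_exist} with the roles of the two lattices and of sources and sinks interchanged shows that \(\Psi\) is a well-defined map into the set of non-intersecting blue connectors.

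The heart of the matter is the claim that, for a non-intersecting blue connector \(P\) with complementary red connector \(Q\), the set of nodes from which \(P\) takes a vertical step equals the set of nodes from which \(Q\) takes a diagonal step. One inclusion is immediate from the definition of \(Q\): a diagonal red step is taken from a node only when \(P\) takes a vertical step there. For the reverse inclusion I would work row by row. The vertical blue steps and the diagonal red steps lying in row \(k\) both emanate from nodes of the form \((k-1,j)\) with \(\alpha_k+1 \leq j \leq \beta_k\), and the inclusion just noted says the columns occupied by diagonal red steps of \(Q\) in row \(k\) form a subset of those occupied by vertical blue steps of \(P\). It remains to check these two sets of columns have the same size. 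Since \(P\) is non-intersecting and \(\Llattice\) has only horizontal and vertical edges, the number of vertical blue steps in row \(k\) is the number of blue paths passing from height \(k-1\) to height \(k\); as the paths are monotone in height and connect the \(i\)th blue source to the \(i\)th blue sink, this count is \(\#\{a \in A : a < k\} - \#\{b \in B : b < k\}\). The same reasoning applied to \(\Rlattice\) gives \(\#\{b' \in B^\comp : b' < k\} - \#\{a' \in A^\comp : a' < k\}\) for the number of diagonal red steps in row \(k\), and since \(A \sqcup A^\comp = B \sqcup B^\comp = \intervalwz{n}\) the two expressions coincide. Hence the two sets of columns are equal, establishing the claim.

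Granting the claim, \(\Psi \circ \Phi\) is the identity: both \(P\) and \(\Psi(\Phi(P))\) start at the blue sources and, from each node they visit, take a vertical step exactly at the nodes from which \(P\) takes a vertical step (for \(\Psi(\Phi(P))\) this is the content of the claim, which identifies the diagonal-step nodes of \(\Phi(P)\) with the vertical-step nodes of \(P\)), so the two connectors coincide step by step; by the symmetric version of the claim, \(\Phi \circ \Psi\) is the identity as well, so \(\Phi\) is a bijection. The map is weight-preserving because a horizontal step carries no weight while a vertical blue step out of \((k-1,j)\) runs down the right side of the box \(\Ybox{k}{j}\) and a diagonal red step out of \((k-1,j)\) runs along the diagonal of that same box, so both carry weight \(x_j\); by the claim, \(P\) and \(\Phi(P)\) have their weighted steps in exactly the same boxes, hence the same weight. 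I expect the column-counting in the claim to be the main obstacle — specifically, making the monotonicity-and-nonpermutability bookkeeping precise and handling the boundary conventions \(\alpha_{n+1}=\alpha_n\) and \(\beta_0=\beta_1\) cleanly.
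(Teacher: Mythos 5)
Your proof is correct and takes essentially the same route as the paper: both arguments establish that the nodes from which \(P\) takes vertical steps coincide with the nodes from which its complement takes diagonal steps by combining the one-way inclusion built into the definition with a counting argument, and then deduce invertibility and weight-preservation from that identification. The only difference is that the paper counts globally (via \(\Sigma A^\comp - \Sigma B^\comp = \Sigma B - \Sigma A\)) where you count row by row; your version is a harmless refinement.
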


\begin{proof}
Write \(\Sigma C\) for the sum of the elements of a set \(C\).
Observe that \(\Sigma A^\comp - \Sigma B^\comp = \Sigma B - \Sigma A\),
and hence that the number of vertical steps made by a blue connector equals the number of diagonal steps made by a red connector.
Thus every vertical blue step in a non-intersecting blue connector must give rise to a diagonal red step in its complementary red connector.
That is, the nodes from which a non-intersecting blue connector takes vertical steps are precisely the nodes from which its complementary red connector takes diagonal steps (and these are precisely the intersections of the connectors).

It is then clear that a non-intersecting blue connector has the same weight as its complementary red connector, and that taking the analogous construction of a complementary blue connector provides an inverse.
\end{proof}

\subsection{Conclusion}

Combining the enumerations given by Propositions \ref{prop:l-path_count} and \ref{prop:r-path_count} with the bijection described in \autoref{section:path_bijection}, we obtain our main theorem, stated in full below.

\begin{theorem}\label{thm:main_theorem}
Let \(n\) be a nonnegative integer, let \(A\) and \(B\) be subsets of \(\intervalwz{n}\) of equal size
, and let \(A^\comp\) and \(B^\comp\) be their complements in \(\intervalwz{n}\).
Let \(\alpha\) and \(\beta\) be partitions with \(n\) parts (with parts equal to \(0\) permitted) such that \(\alpha_i \leq \beta_i\) for all \(i \in \intervalwz{n}\).
\parallelogramcondition
Then the determinants
\[
    \det\Bigl(%
        h_{b - a} (x_{\alpha_{a+1}+1}, x_{\alpha_{a+1}+2}, \ldots, x_{\beta_{b}}) %
    \Bigr)_{a \in A, b \in B}
\]
and
\[
    \det\Bigl(%
        e_{a' - b'} (x_{\alpha_{a'}+1}, x_{\alpha_{a'}+2}, \ldots, x_{\beta_{b'+1}}) %
    \Bigr)_{a' \in A^\comp, b' \in B^\comp}
\]
are equal.
\end{theorem}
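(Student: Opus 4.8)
\textbf{Proof of \autoref{thm:main_theorem} (plan).}
The plan is simply to chain together the results already established in this section. First I would invoke \autoref{prop:l-path_count}: since \(\alpha\) and \(\beta\) are partitions, that proposition identifies the first determinant in the statement with the weighted count of non-intersecting blue connectors in \(\Llattice\). Next I would invoke \autoref{prop:r-path_count}: the hypothesis placed on \(\alpha\) and \(\beta\) in \autoref{thm:main_theorem} is exactly the ``parallelogram condition'' under which that proposition holds, so it identifies the second determinant with the weighted count of non-intersecting red connectors in \(\Rlattice\). Both identifications rest on the Lindstr\"om--Gessel--Viennot lemma, which applies because each of the two lattices is acyclic and nonpermutable.

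It then remains to match the two weighted counts, and this is precisely what the bijection of \autoref{section:path_bijection} does: the complementary-connector construction is a bijection between non-intersecting blue connectors in \(\Llattice\) and non-intersecting red connectors in \(\Rlattice\) (by \autoref{prop:complements_exist} and the proposition immediately following it), and it preserves weight because the vertical blue steps of a connector correspond bijectively to the diagonal red steps of its complement, each step contributing the weight \(x_j\) of its column. Reading off the chain of equalities ``first determinant \(=\) count of non-intersecting blue connectors \(=\) count of non-intersecting red connectors \(=\) second determinant'' yields the theorem.

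There is no genuine obstacle remaining at this point; the only care required is to verify that the hypotheses of the three cited ingredients are all implied by those of \autoref{thm:main_theorem} — \autoref{prop:l-path_count} needs \(\alpha\) and \(\beta\) to be partitions, the bijection of \autoref{section:path_bijection} needs only \(\alpha_i \leq \beta_i\) for every \(i\), and \autoref{prop:r-path_count} needs the parallelogram condition. Finally, I would remark that \autoref{thm:intro_det_identity} is the special case obtained by requiring each part of \(\alpha\) and of \(\beta\) to be at most \(1\) less than the preceding part: telescoping this requirement over the range \(b'+1 \leq i \leq a'\) gives \(\alpha_i - \alpha_{a'} = \sum_{k=i}^{a'-1}(\alpha_k - \alpha_{k+1}) \leq a'-i\) and likewise \(\beta_{b'+1} - \beta_i \leq i-b'-1\), which is the third bullet of the parallelogram condition, so that hypothesis is then automatic.
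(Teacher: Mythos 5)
Your proposal is correct and is essentially identical to the paper's own proof, which likewise obtains the theorem by combining Propositions \ref{prop:l-path_count} and \ref{prop:r-path_count} with the weight-preserving bijection of \autoref{section:path_bijection}, after checking that the theorem's hypothesis is exactly the parallelogram condition needed for \autoref{prop:r-path_count}. Your closing telescoping remark also matches the paper's observation that partitions whose parts decrease by at most \(1\) satisfy the hypothesis, recovering \autoref{thm:intro_det_identity}.
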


\begin{remark}
The hypothesis in \autoref{thm:main_theorem} is precisely the hypothesis in \autoref{prop:r-path_count} which ensures that each entry of the second matrix counts the number of red paths correctly (by requiring that all minimal parallelograms between appropriate sinks and sources lie inside the Young diagram).
This hypothesis is necessary and sufficient for each individual entry to give a correct count.
However, this hypothesis is not necessary for the determinant to correctly count the total number of non-intersecting red connectors.
For example, suppose there exists \(m \in \intervalwz{n}\) such that \(\abs{A \cap \intervalwz{m}} = \abs{B \cap \intervalwz{m}}\).
Then no non-intersecting connector crosses the \((m+1)\)th row, and so the count of non-intersecting connectors is the product of the counts of non-intersecting connectors on each half of the lattice.
Meanwhile the matrices whose entries count paths are block triangular, and the entries of the off-diagonal block are irrelevant to the determinant, and so it is not necessary for the hypothesis to hold for pairs with indices on both sides of \(m+1\).
\end{remark}

Partitions whose parts are at most \(1\) less than the preceding part clearly satisfy the hypothesis of \autoref{thm:main_theorem}, and so we recover \autoref{thm:intro_det_identity} given in the introduction.

\subsection{Flagged Schur polynomial identity}
\label{subsection:flagged_polys}
We here state our main result in terms of flagged skew Schur polynomials.
Flagged Schur polynomials were introduced in the study of Schubert polynomials by Lascoux and Sch\"{u}tzenberger in \cite[Annexe~II]{lascoux1982Schubert} as determinants in symmetric polynomials.
These polynomials are sometimes also defined as generating functions for semistandard Young tableaux.
These definitions, however, are not always equivalent; both definitions, as well as sufficient conditions for their equivalence, are given in \cite[\S3]{wachs1985flaggedSchur}.

We first recall the determinantal definition.

\begin{definition}
Let \(\la\) and \(\mu\) be partitions with \(l\) parts such that \(\mu_i \leq \la_i\).
Let \(f\) and \(g\) be sequences of positive integers. 
The determinantal row-flagged skew Schur polynomial of shape \(\la/\mu\) with flags \(f\) and \(g\) is
\[
    s_{\la/\mu}(f,g) = \det\Bigl(%
        h_{\la_i - \mu_j -i +j}(x_{f_j}, \ldots, x_{g_i})
    \Bigr)_{1 \leq i,j \leq l},
\]
and the determinantal column-flagged skew Schur polynomial of shape \(\la/\mu\) with flags \(f\) and \(g\) is
\[
    s^\ast_{\la/\mu}(f,g) = \det\Bigl(%
        e_{\la_i - \mu_j -i +j}(x_{f_j}, \ldots, x_{g_i})
    \Bigr)_{1 \leq i,j \leq l}.
\]
\end{definition}

We can rewrite our identity \autoref{thm:main_theorem} as follows, yielding a duality theorem for determinantal flagged skew Schur functions.
Recall that we index sets from smallest elements to largest (so, for example, \(A_1 < \ldots < A_l\)).
The \emph{conjugate} of a partition \(\la\) is denoted \(\la'\), obtained by reflecting the Young diagram in the main diagonal; conjugation is related to set complementation by \cite[(1.7), p.~3]{macdonald1998symmetric}.

\begin{theorem}\label{thm:flagged_polynomial_identity}
Let \(n\) be a nonnegative integer, let \(A\) and \(B\) be subsets of \(\intervalwz{n}\) of equal size \(l\)
, and let \(A^\comp\) and \(B^\comp\) be their complements in \(\intervalwz{n}\).
Let \(\alpha\) and \(\beta\) be partitions with \(n\) parts (with parts equal to \(0\) permitted) such that \(\alpha_i \leq \beta_i\) for all \(i \in [n]\).
\parallelogramcondition
Set:
\begin{align*}
    \mu_i   &= A_{l+1-i} + i - l, &\, 
    \la_i       &= B_{l+1-i} + i - l, \\
    f_i         &= \alpha_{A_{l+1-i}+1}+1, &\,
    g_i         &= \beta_{B_{l+1-i}}, \\
    f^\ast_i    &= \alpha_{A^\comp_{i}}+1, &\,
    g^\ast_i    &= \beta_{B^\comp_{i}+1}.
\end{align*}
Then
\[
    s_{\la/\mu}(f,g) = s^\ast_{\la'/\mu'}(f^\ast, g^\ast).
\]
\end{theorem}

\newcommand{\stabl}{\bar{s}}
We now consider the definition of flagged Schur polynomials as generating functions for tableaux.
We recall the definition from \cite[\S3]{wachs1985flaggedSchur}, denoting the polynomials thus defined as \(\stabl\) to distinguish them clearly from the determinantal \(s\).

\newcommand{\T}{\mathcal{T}}
\begin{definition}
Let \(\la\) and \(\mu\) be partitions such that \(\mu_i \leq \la_i\).
Let \(f\) and \(g\) be sequences of positive integers.
Let \(\T(\la/\mu, f, g)\) be the set of semistandard Young tableaux of shape \(\la/\mu\) such that for each \(i\) the entries in the \(i\)th row are bounded below by \(f_i\) and above by \(g_i\),
and let \(\T^\ast(\la/\mu, f, g)\) be the set of semistandard Young tableaux of shape \(\la/\mu\) such that for each \(i\) the entries in the \(i\)th column are bounded below by \(f_i\) and above by \(g_i\).
Given a tableaux \(t\), let \(x^t\) denote the monomial whose power of \(x_i\) is the number of occurences of \(i\) in \(t\).
The tableaux-generated row-flagged skew Schur polynomial of shape \(\la/\mu\) with flags \(f\) and \(g\) is
\[
    \stabl_{\la/\mu}(f,g) = \sum_{t \in \T(\la/\mu,f,g)} x^t,
\]
and the tableaux-generated column-flagged skew Schur polynomial of shape \(\la/\mu\) with flags \(f\) and \(g\) is
\[
    \stabl^\ast_{\la/\mu}(f,g) = \sum_{t \in \T^\ast(\la'/\mu',f,g)} x^t.
\]
\end{definition}

In \cite[Theorems 3.5 and 3.5*]{wachs1985flaggedSchur}, sufficient conditions are given for when the determinantal and tableaux-generated flagged Schur polynomials are equal.
Using these conditions, it can be shown that under the hypotheses of \autoref{thm:flagged_polynomial_identity} the determinantal and tableaux-generated flagged Schur polynomials are equal.
We thus obtain a duality theorem for tableaux-generated flagged Schur polynomials by replacing ``\(s\)'' with ``\(\stabl\)'' in \autoref{thm:flagged_polynomial_identity}.

The conditions from \cite[Theorems 3.5 and 3.5*]{wachs1985flaggedSchur} are not to be overlooked: they can fail frequently.
Consequently, tableaux-generated duality can hold even when the determinantal duality does not.
This is illustrated in the following example.

\ytableausetup{smalltableaux}
\begin{example}
Suppose \(l = 1\), \(r=2\), \(A = \set{0}\), \(B=\set{2}\), \(\alpha = (0, 0)\) and \(\beta = (m,1)\) for some \(m \geq 1\).
Then \(\mu = (0)\), \(\la=(2)\), \(f=(1)\), \(g=(1)\), \(f^\ast=(1,1)\) and \(g^\ast = (m,1)\).
Observe that the sets \(\T(\la/\mu, f,g)\) and \(\T^\ast(\la/\mu,f^\ast, g^\ast)\) are equal (containing only the tableau \raisebox{-1pt}{\ytableaushort{11}}) regardless of the value of \(m\), and so we have
\[
     \stabl_{\la/\mu}(f,g) = x_1^2 = \stabl^\ast_{\la'/\mu'}(f^\ast, g^\ast).
\]

However, the hypotheses of \autoref{thm:flagged_polynomial_identity} are met if and only if \(m \in \set{1,2}\).
We have, for all \(m\), that
\[
s_{\la/\mu}(f, g)
    = \det \begin{psmallmatrix}
h_2(x_1)
\end{psmallmatrix}
    = x_1^2,
\]
but when \(m \geq 3\) we have
\[
s^\ast_{\la'/\mu'}(f^\ast, g^\ast)
    = \det \begin{psmallmatrix}
e_1(x_1, \ldots, x_m) & e_2(x_1, \ldots, x_m) \\ 1 & x_1
\end{psmallmatrix}
    = x_1^2 - e_2(x_2, \ldots, x_m).
\]
Thus determinantal duality fails despite the tableaux-generated duality holding.
\end{example}

\section{Specialisations of the main theorem}\label{section:corollaries}

In this section we indicate how to recover Gessel and Viennot's binomial duality theorem (\autoref{thm:binomial_identity}) and Aitken's symmetric function duality theorem (\autoref{thm:sym_function_identity}) from our main thereom.
We also deduce lifts of Gessel and Viennot's theorem to \(q\)-binomial coefficients (\autoref{cor:q-binomial_identity}) and to symmetric polynomials (\autoref{cor:sym_poly_binomial_identity}).

To deduce \autoref{thm:binomial_identity}, \autoref{cor:q-binomial_identity} and \autoref{cor:sym_poly_binomial_identity}, we use staircase-shaped lattices.

\sympolybinomidentity*

\begin{proof}
In \autoref{thm:main_theorem}, take \(\beta = (n^{n})\) and take \(\alpha\) to be the staircase given by \(\alpha_i = n-i\) for \(i \in [n]\).
Then the left-hand matrix is \((h_{b-a}(x_{n-a}, \ldots, x_{n}))_{a \in A,b \in B}\) and the right-hand matrix is \((e_{a'-b'}(x_{n-a'+1}, \ldots, x_{n}))_{a' \in A^\comp,b \in B^\comp}\).
Relabelling the variables (via \(x_i \mapsto x_{n+1-i}\)) gives the result.
\end{proof}

The \(q\)-binomial coefficients (also known as Gaussian coefficients) are polynomials in \(q\), and are a generalisation of the usual binomial coefficients in the sense that setting \(q=1\) yields the corresponding binomial coefficients.
They are defined (see for example \cite[\S3]{konvalina2000binomialcoefficients}) by 
\[
    \qbinom{n}{k} = \frac{(1-q^n)(1-q^{n-1})\cdots(1-q^{n-k+1})}{(1-q)(1-q^{2})\cdots(1-q^{k})}.
\]

\qbinomialidentity*

\begin{proof}
Recall that the \(q\)-binomial coefficients are related to the symmetric polynomials in the following way \cite[\S3, equations 17 and 18]{konvalina2000binomialcoefficients}:
\begin{align*}
    h_{k}(1, q, \ldots, q^{n-1}) &= \qbinom{n+k-1}{k}, \\
    e_{k}(1, q, \ldots, q^{n-1}) &= q^{\binom{k}{2}} \qbinom{n}{k}.
\end{align*}
Thus, set \(x_i = q^{i-1}\) in \autoref{cor:sym_poly_binomial_identity} and the matrix entries become, respectively,
\[
h_{b-a}(1,q, \ldots, q^{a})
    = \qbinom{b}{b-a} = \qbinom{b}{a}
\]
and
\[
e_{a'-b'}(1,q, \ldots, q^{a'-1})
    = q^{\binom{a'-b'}{2}} \qbinom{a'}{a'-b'} = q^{\binom{a'-b'}{2}}\qbinom{a'}{b'},
\]
as required.
\end{proof}

Setting \(q=1\) in \autoref{cor:q-binomial_identity} recovers Gessel and Viennot's binomial duality theorem (\autoref{thm:binomial_identity}).

To recover Aitken's symmetric function duality theorem (\autoref{thm:sym_function_identity}), we use rectangular lattices.
In \autoref{thm:main_theorem}, take \(\alpha = (0^n)\) and \(\beta = (m^{n})\), for a positive integer \(m\), to obtain
\[
    \det\Bigl(%
        h_{b - a} (x_{1}, x_{2}, \ldots, x_{m}) %
    \Bigr)_{a \in A, b \in B}
    =
    \det\Bigl(%
        e_{a' - b'} (x_{1}, x_{2}, \ldots, x_{m}) %
    \Bigr)_{a' \in A^\comp, b' \in B^\comp}
    .
\]
Since this holds for arbitrary positive integers \(m\),
\autoref{thm:sym_function_identity} follows.

\section{Insufficiency of Jacobi's complementary minor formula}
\label{section:JT_connection}

\autoref{thm:sym_function_identity} was proved in \cite{aitken1931duality} using Jacobi's complementary minor formula and a form of Newton's identity.
These two results are stated below.
We here outline Aitken's proof, and show that this method is not sufficient to deduce our main theorem.

For our purposes it is convenient to index matrix rows and columns from \(0\).
Given a \((d+1) \times (d+1)\) matrix \(M\) and subsets \(A,B \subseteq \intervalwz{d}\), let \(M_{A,B}\) denote the matrix obtained by retaining only those rows indexed by elements of \(A\) and those columns indexed by elements of \(B\).
Write \(\Sigma A\) for the sum of the elements of \(A\).

\begin{proposition}[Jacobi's complementary minor formula { \cite[Lemma A.1(e), p.~96]{caracciolo2013identities}}]
\label{prop:Jacobi_comp_minor}
Let \(M\) be an invertible \((d+1) \times (d+1)\) matrix and let \(A,B \subseteq \intervalwz{d}\) be subsets of equal size.
Then
\[
    \det ( M_{A,B} ) = (-1)^{\Sigma A + \Sigma B} \det (M) \det \mleft( {\mleft((M^{-1})^{\top}\mright)}_{A^\comp, B^\comp} \mright).
\]
\end{proposition}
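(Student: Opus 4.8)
The plan is to prove this standard fact by reducing, via row and column permutations, to the special case where the retained rows $A$ and the retained columns $B$ form the leading principal $\ell \times \ell$ block (with $\ell = \abs{A} = \abs{B}$), in which case the identity falls out of a one-line block-matrix manipulation; the permutations are then accounted for by a short combinatorial lemma on the sign of a sorting permutation.

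First I would record that sign lemma. For $I \subseteq \intervalwz{d}$ of size $\ell$, let $\sigma_I$ be the permutation of $\intervalwz{d}$ whose one-line notation lists the elements of $I$ in increasing order followed by the elements of $I^\comp$ in increasing order. An inversion of $\sigma_I$ can only be a pair in which an element of $I$ precedes a strictly smaller element of $I^\comp$, and for the $m$th smallest element $i$ of $I$ there are $i - (m-1)$ such smaller elements of $I^\comp$; summing gives $\Sigma I - \binom{\ell}{2}$ inversions, so $\operatorname{sgn}(\sigma_I) = (-1)^{\Sigma I - \binom{\ell}{2}}$. In particular $\operatorname{sgn}(\sigma_A)\operatorname{sgn}(\sigma_B) = (-1)^{\Sigma A + \Sigma B}$.

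Next I would dispatch the leading-block case. Write $M$ in $2 \times 2$ block form with top-left block $P$ of size $\ell$, and partition $M^{-1}$ conformably with bottom-right block $S$ (of size $d+1-\ell$) and top-right block $T$. Two of the four block equations in $M M^{-1} = I$ say exactly that $M$ times the block-upper-triangular matrix with diagonal blocks $I_\ell$ and $S$ and corner block $T$ equals the block-lower-triangular matrix with diagonal blocks $P$ and $I_{d+1-\ell}$ and the bottom-left block of $M$ in its corner. Taking determinants of both sides gives $\det(M)\det(S) = \det(P)$, which is precisely the asserted identity when $A = B = \intervalwz{\ell-1}$: then $(-1)^{\Sigma A + \Sigma B} = 1$, and $((M^{-1})^\top)_{A^\comp, B^\comp}$ is the transpose of the $S$-block, so has determinant $\det(S)$. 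Note this step requires nothing beyond invertibility of $M$ — in particular $P$ need not be invertible — so no genericity or continuity argument is needed.

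Finally I would treat the general case by permuting. Let $\Pi_\sigma$ be the permutation matrix whose left action sends the $i$th row to the $\sigma(i)$th row, and put $M' = \Pi_{\sigma_A} M \Pi_{\sigma_B}^{-1}$. One checks directly that the leading $\ell \times \ell$ block of $M'$ is $M_{A, B}$ (rows indexed by $A$, columns by $B$, both in increasing order), that $\det(M') = \operatorname{sgn}(\sigma_A)\operatorname{sgn}(\sigma_B)\det(M) = (-1)^{\Sigma A + \Sigma B}\det(M)$, and that $(M')^{-1} = \Pi_{\sigma_B} M^{-1} \Pi_{\sigma_A}^{-1}$ has $(i,j)$-entry $(M^{-1})_{\sigma_B(i),\,\sigma_A(j)}$, so its bottom-right block is $(M^{-1})_{B^\comp, A^\comp}$. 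Applying the leading-block case to $M'$ and rearranging yields $\det(M_{A,B}) = (-1)^{\Sigma A + \Sigma B}\det(M)\det\mleft((M^{-1})_{B^\comp, A^\comp}\mright)$, and since $(M^{-1})_{B^\comp, A^\comp}$ is the transpose of $((M^{-1})^\top)_{A^\comp, B^\comp}$, this is the claim. I expect the only real obstacle to be the index-and-sign bookkeeping in this last paragraph: pinning down exactly how conjugation by $\Pi_{\sigma_A}$ and $\Pi_{\sigma_B}$ acts on submatrices and on the inverse, and confirming via the sign lemma that the two sorting signs collapse to the single factor $(-1)^{\Sigma A + \Sigma B}$.
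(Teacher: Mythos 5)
The paper offers no proof of this proposition at all: it is quoted as a known result with a citation to Caracciolo--Sokal--Sportiello, so there is nothing in the paper to compare against line by line. Your argument is a correct, self-contained proof of the standard kind: the leading-block case via the factorisation \(M \bigl(\begin{smallmatrix} I & T \\ 0 & S\end{smallmatrix}\bigr) = \bigl(\begin{smallmatrix} P & 0 \\ R & I\end{smallmatrix}\bigr)\) (which, as you note, needs only invertibility of \(M\) and no genericity argument), the inversion count \(\operatorname{sgn}(\sigma_I) = (-1)^{\Sigma I - \binom{\ell}{2}}\) for the sorting permutation, and the reduction of the general case by conjugating with permutation matrices; all three pieces check out, including the transpose bookkeeping that turns \((M^{-1})_{B^\comp, A^\comp}\) into \(\bigl((M^{-1})^\top\bigr)_{A^\comp, B^\comp}\). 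The one slip is in the final paragraph: with your stated convention that \(\Pi_\sigma\) sends the \(i\)th row to the \(\sigma(i)\)th row, one has \((\Pi_{\sigma_A} M)_{k,j} = M_{\sigma_A^{-1}(k),j}\), so the leading block of \(\Pi_{\sigma_A} M \Pi_{\sigma_B}^{-1}\) is \emph{not} \(M_{A,B}\); you want \(\Pi_{\sigma_A}^{-1} M \Pi_{\sigma_B}\) (or the opposite convention for \(\Pi_\sigma\)). Since \(\operatorname{sgn}(\sigma) = \operatorname{sgn}(\sigma^{-1})\) this changes nothing downstream, and you flagged exactly this bookkeeping as the point to pin down, so it is a convention error rather than a gap.
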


\begin{proposition}[Newton's identity {\cite[Equation (\ensuremath{2.6'}), p.~21]{macdonald1998symmetric}}]
\label{prop:Newton_identity}
Let \(d > 0\).
Then
\[
    \sum_{i=0}^d {(-1)}^i e_i h_{d-i} = 0.
\]
\end{proposition}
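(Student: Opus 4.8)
The plan is the standard generating-function argument relating the two families of symmetric functions. Work in the ring $\Lambda[[t]]$ of formal power series in an auxiliary variable $t$ over the ring of symmetric functions, and set
\[
    E(t) = \sum_{i \geq 0} e_i t^i,
    \qquad
    H(t) = \sum_{i \geq 0} h_i t^i .
\]
(Equivalently, one may work with symmetric polynomials in $x_1, \ldots, x_N$ for each $N$ and observe that the identity is stable as $N \to \infty$; each coefficient is a finite sum, so everything is well defined as a formal power series.)

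First I would record the product expansions $E(t) = \prod_j (1 + x_j t)$ and $H(t) = \prod_j (1 - x_j t)^{-1}$. The former is immediate from the definition of $e_i$ as the sum of all squarefree degree-$i$ monomials: the coefficient of $t^i$ in $\prod_j (1 + x_j t)$ is obtained by selecting $x_j$ from $i$ distinct factors in all possible ways. For the latter, expand each factor as a geometric series $(1 - x_j t)^{-1} = \sum_{k \geq 0} x_j^k t^k$; the coefficient of $t^i$ in the product is then the sum of \emph{all} degree-$i$ monomials in the $x_j$, which is $h_i$.

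The key step is the observation that $E(-t) H(t) = 1$, which follows at once by substituting $-t$ for $t$ in the product for $E$:
\[
    E(-t) H(t) = \prod_j (1 - x_j t) \cdot \prod_j (1 - x_j t)^{-1} = 1 .
\]
Extracting the coefficient of $t^d$ from both sides then finishes the proof: on the right it is $0$ whenever $d > 0$, while on the left the coefficient of $t^d$ in $\bigl(\sum_i (-1)^i e_i t^i\bigr)\bigl(\sum_k h_k t^k\bigr)$ is precisely $\sum_{i=0}^d (-1)^i e_i h_{d-i}$.

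I do not expect any genuine obstacle. The only point needing (minimal) care is the formal setting, namely that one manipulates well-defined formal power series rather than convergent ones. If one wanted to avoid generating functions entirely there is a direct sign-reversing involution: on pairs consisting of a squarefree monomial contributing to some $e_i$ together with an arbitrary monomial contributing to $h_{d-i}$, toggle whether the largest-index variable appearing is assigned to the squarefree part or to the unrestricted part; this flips the sign $(-1)^i$ and cancels all contributions when $d > 0$. Either route gives the result.
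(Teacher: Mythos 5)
Your proof is correct and is the standard generating-function argument ($E(-t)H(t)=1$, then extract the coefficient of $t^d$); the paper itself gives no proof but cites Macdonald, where this identity is established in exactly this way. Nothing further is needed.
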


In \cite{aitken1931duality}, Aitken obtains his identity (\autoref{thm:sym_function_identity}) by applying \autoref{prop:Jacobi_comp_minor} to the matrix \({( h_{j-i} )}_{0\leq i,j \leq n}\).
Its inverse is \({( {(-1)}^{i+j} e_{j-i})}_{0\leq i,j \leq n}\), as can be verified using \autoref{prop:Newton_identity}. 

If we attempt to use this method to prove \autoref{thm:main_theorem}, we would be required to show, given partitions \(\alpha\) and \(\beta\) satisfying the hypotheses, that the matrices
\begin{align*}
H(\beta / \alpha) &= {\Bigl(
    h_{j-i}(x_{\alpha_{i+1}+1}, \ldots, x_{\beta_j})
\Bigr)}_{0\leq i,j \leq n}\\
\intertext{and}
E(\beta / \alpha) &= {\mleft(
    {(-1)}^{i+j} e_{j-i}(x_{\alpha_j+1}, \ldots, x_{\beta_{i+1}})
\mright)}_{0\leq i,j \leq n}
\end{align*}
are inverse.
However, \autoref{thm:main_theorem} can provide a determinant identity when this is not the case.

For example, let \(n=3\) and let \(\alpha = (2, 0, 0)\) and \(\beta = (3,3,1)\).
We have
\begin{align*}
    {\mleft(E(\beta / \alpha)H(\beta / \alpha)\mright)}_{0,2}
    &= h_2(x_3) - e_1(x_3)h_1(x_1, x_2, x_3) + e_2(x_1, x_2, x_3) \\
    &= x_1 x_2 \\
    &\neq 0,
\end{align*}
so the matrices \(H(\beta / \alpha)\) and \(E(\beta / \alpha)\) are not inverse.
Nevertheless, choosing \(A = \set{0,1,2}\) and \(B = \set{1,2,3}\) meets the hypotheses of \autoref{thm:main_theorem}, and so we find that the determinants
\[
\hspace*{-6.5em}
\begin{blockarray}{cc>{\scriptstyle}c>{\scriptstyle}c>{\scriptstyle}c}
      &   & b=1 & b=2 & b=3 \\
      &   & \beta_{b}=3 & \beta_{b}=3 & \beta_{b}=1\\[4pt]
    \begin{block}{>{\scriptstyle}c>{\scriptstyle}c|ccc|}
    a=0 & \alpha_{a+1}+1=3 & h_1(x_3) & h_2(x_3) & 0 \\
    a=1 & \alpha_{a+1}+1=1 & 1 & h_1(x_1, x_2, x_3) & h_2(x_1) \\
    a=2 & \alpha_{a+1}+1=1 & 0 & 1 & h_1(x_1) \\
    \end{block}
\end{blockarray}
\]
and
\[
\hspace*{-6.5em}
\begin{blockarray}{ccc>{\scriptstyle}c}
      &   && b'=0 &\\
      &   && \beta_{b'+1}=3 & \\[4pt]
    \begin{block}{>{\scriptstyle}c>{\scriptstyle}cc|c|}
    a'=3 & \alpha_{a'}+1=1 && e_3(x_1, x_2, x_3)\\
    \end{block}
\end{blockarray}
\]
are equal.

\bibliographystyle{alpha}
\bibliography{references}

\begin{thebibliography}{Wac85}

\bibitem[Ait31]{aitken1931duality}
{\initials{A.C.}}~Aitken.
\newblock Note on dual symmetric functions.
\newblock {\em Proceedings of the Edinburgh Mathematical Society}, 2(3):164--167, 1931.

\bibitem[BC05]{benjamin2005countingOnDeterminants}
Arthur~T. Benjamin and Naiomi~T. Cameron.
\newblock Counting on determinants.
\newblock {\em The American Mathematical Monthly}, 112(6):481--492, 2005.

\bibitem[CSS13]{caracciolo2013identities}
Sergio Caracciolo, Alan~D. Sokal, and Andrea Sportiello.
\newblock Algebraic/combinatorial proofs of {C}ayley-type identities for derivatives of determinants and pfaffians.
\newblock {\em Advances in Applied Mathematics}, 50(4):474--594, Apr 2013.

\bibitem[GV85]{gessel1985paths}
Ira Gessel and G{\'e}rard Viennot.
\newblock Binomial determinants, paths, and hook length formulae.
\newblock {\em Advances in Mathematics}, 58(3):300--321, 1985.

\bibitem[KM59]{karlin1959coincidence}
Samuel Karlin and James McGregor.
\newblock Coincidence probabilities.
\newblock {\em Pacific Journal of Mathematics}, 9(4):1141--1164, 1959.

\bibitem[Kon00]{konvalina2000binomialcoefficients}
John Konvalina.
\newblock A unified interpretation of the binomial coefficients, the {Stirling} numbers, and the {Gaussian} coefficients.
\newblock {\em The American Mathematical Monthly}, 107(10):901--910, 2000.

\bibitem[Lin73]{lindstroem1973matroids}
Bernt Lindstr\"{o}m.
\newblock On the vector representations of induced matroids.
\newblock {\em Bulletin of the London Mathematical Society}, 5(1):85--90, 1973.

\bibitem[LS82]{lascoux1982Schubert}
Alain Lascoux and Marcel-Paul Sch\"{u}tzenberger.
\newblock Polyn\^{o}mes de {S}chubert.
\newblock {\em Comptes rendus des s\'{e}ances de l'Acad\'{e}mie des Sciences, S\'{e}rie I Math\'{e}matique}, 294(13):447--450, 1982.

\bibitem[Mac98]{macdonald1998symmetric}
{\initials{I.G.}}~Macdonald.
\newblock {\em Symmetric Functions and Hall Polynomials}.
\newblock Oxford classic texts in the physical sciences. Clarendon Press, 1998.

\bibitem[Ste90]{stembridge1990nonintersecting}
John~R. Stembridge.
\newblock Nonintersecting paths, pfaffians and plane partitions.
\newblock {\em Advances in Mathematics}, 83:96--131, 1990.

\bibitem[Vie85]{viennot1985orthogonal}
G{\'e}rard Viennot.
\newblock A combinatorial theory for general orthogonal polynomials with extensions and applications.
\newblock In {\em Polyn{\^o}mes Orthogonaux et Applications}, volume 1171 of {\em Lecture Notes in Mathematics}, pages 139--157. Springer Berlin Heidelberg, 1985.

\bibitem[Wac85]{wachs1985flaggedSchur}
Michelle~L. Wachs.
\newblock Flagged {S}chur functions, {S}chubert polynomials, and symmetrizing operators.
\newblock {\em Journal of Combinatorial Theory, Series A}, 40(2):276--289, 1985.

\end{thebibliography}

\end{document}